\documentclass{article}
\usepackage[utf8]{inputenc}
\usepackage[a4paper,top=3cm,bottom=2cm,left=3cm,right=3cm,marginparwidth=1.75cm]{geometry}
\usepackage{amssymb,latexsym,amsmath,epsfig,amsthm} %% Add other packages as necessary
\usepackage{amsmath,amsthm,amssymb,amscd,esint,enumitem}
\usepackage[noblocks]{authblk}
\usepackage{mathrsfs}
\usepackage{tikz}
\usepackage{url}
\usepackage{xcolor}
\usepackage[colorlinks,citecolor=red,pagebackref,hypertexnames=false]{hyperref}

\usepackage{latexsym}
\usepackage{bm}
\usepackage{mathtools}

\makeatletter

\renewcommand\section{\@startsection {section}{1}{\z@}
{-30pt \@plus -1ex \@minus -.2ex}
{2.3ex \@plus.2ex}
{\normalfont\normalsize\bfseries\boldmath}}

\renewcommand\subsection{\@startsection{subsection}{2}{\z@}
{-3.25ex\@plus -1ex \@minus -.2ex}
{1.5ex \@plus .2ex}
{\normalfont\normalsize\bfseries\boldmath}}

\renewcommand{\@seccntformat}[1]{\csname the#1\endcsname. }

\makeatother

\newtheorem{theorem}{Theorem}[section]
\newtheorem{lemma}[theorem]{Lemma}
\newtheorem{proposition}[theorem]{Proposition}
\newtheorem{corollary}[theorem]{Corollary}

\theoremstyle{definition}

\newtheorem{definition}[theorem]{Definition}

\newtheorem{question}[theorem]{Question}

%% add any other theorem environments you will used

\newcommand{\R}{\mathbb{R}}

\newcommand{\C}{\mathbb{C}}

\newcommand{\ds}{\displaystyle}
\newcommand{\Aff}{\text{Aff}}
\newcommand{\SE}{\text{SE}_2(\R)}
\newcommand{\Stab}{\text{Stab}}

\title{A Structural Theorem for Sets With Few Triangles}
\author{Sam Mansfield, Jonathan Passant\thanks{Supported by the Heilbronn Institute.}}
%\date

\begin{document}

\maketitle

\begin{abstract}
We show that if a finite point set $P\subseteq \R^2$ has the fewest congruence classes of triangles possible, up to a constant $M$, then at least one of the following holds. 
\begin{itemize}
    \item There is a $\sigma>0$ and a line $l$ which contains $\Omega(|P|^\sigma)$ points of $P$. Further, a positive proportion of $P$ is covered by lines parallel to $l$ each containing $\Omega(|P|^\sigma)$ points of $P$.
    \item There is a circle $\gamma$ which contains a positive proportion of $P$.
\end{itemize}
This provides evidence for two conjectures of Erd\H{o}s. We use the result of Petridis-Roche-Newton-Rudnev-Warren on the structure of the affine group combined with classical results from additive combinatorics.
\end{abstract}

\section{Introduction}

Let $P$ be a finite point set in $\R^2$. Erd\H{o}s' famous distance conjecture \cite{E46} asks what the minimum number of distinct distances such a set can describe. This has lead to many beautiful techniques over 64 years culminating in the 2010 solution of Guth and Katz \cite{GK15}. 

Erd\H{o}s also posed much harder problems attempting to understand the structure of sets that determine few distinct distances. To provide context for these, suppose we have a square $\sqrt{N} \times \sqrt{N}$ lattice. A classical result of Landau \cite{L1909} on the growth of sums of squares shows that such a lattice gives $\Theta(N/\sqrt{\log N})$ distinct distances. One can find other lattices that give the same number of distances, but there are no known constructions which give fewer distances. We call a point set $P$ that has $c|P|/\sqrt{\log |P|}$ distances a \textit{near-optimal} point set. Erd\H{o}s' conjectures \cite{erdos1986structure} concern the structure of such near-optimal sets. The hardest such question is:
\begin{question}(Erd\H{o}s)
Do all \textit{near-optimal} point sets have a lattice structure?
\end{question}
Erd\H{o}s admitted that ``I really have no idea and the problem is perhaps too vaguely stated.''
Erd\H{o}s suggests that it would be nice to see if a near-optimal point set $P$ must contain at least $|P|^{1/2}$ points on a common line. This conjecture also appears too hard. The first bound due to Szemer\'edi (communicated by Erd\H{o}s \cite{erdos1975Line}) showed there must be a line containing at least $\sqrt{\log|P|}$ points of $P$. Using K\H ov\'ari–S\'os–Tur\'an \cite{kovari1954problem} this can be improved to at least $\log|P|$ points, see \cite{SheffBlogLine}. There is a weaker version of Erd\H{o}s' line question that is still open.
\begin{question}\label{Question: PoorLine}(Erd\H{o}s)
Prove or disprove: For a sufficiently small $\varepsilon>0$, every near-optimal point set $P$ contains at least $c|P|^\varepsilon$ points on a common line.
\end{question}
Lund, Sheffer and de Zeeuw \cite{lund2016bisector} used bisector energy to show for any $0<\sigma\leq 1/4$ there is either a line or circle containing $c|P|^\sigma$ points of $P$, or there are $c|P|^{8/5-12\sigma/5-\varepsilon}$ lines each containing at least $c\sqrt{\log|P|}$ points of $P$.
There has been recent work on the converse problem: lines, circles and constant degree polynomials cannot have too large an intersection with a near-optimal set \cite{sheffer2016fewdistancesNoLineCircles, raz2015fewdistancesNoLineCircles, pach2017distinct}. By combining these three results, we obtain that for every near-optimal set $P$, every constant-degree algebraic curve contains at most $c|P|^{43/52}$ points of $P$.

One possible approach, suggested by Nets Katz, is to show that the additive energy of a near-optimal point set is large \cite[Problem 34]{sheffer2014openproblems}. Following this philosophy, Hanson \cite{hanson2018structure}, Roche-Newton \cite{rocheNewton2016structure} and Pohoata \cite{pohoata2019structure} demonstrate that near-optimal Cartesian products have small difference sets.

Since the introduction of the Guth-Katz--Elekes-Sharir framework \cite{ES11, GK15}, it has been productive to view distinct distances as congruence classes of pairs of points $(p,q)\in P^2$ under the action of the group of rigid motions on $\R^2$. This was essential to the Guth-Katz \cite{GK15} result which showed a point set $P$ has $\Omega\left(|P|/\log|P|\right)$ distinct distances. One can think of classes of congruent triangles as the congruence classes of triples of points $(p,q,r)$ under the same action (see Figure \ref{fig:CongTriangles}). This perspective was used by Rudnev \cite{R12} to show that a point set $P$ describes at least $\Omega(|P|^2)$ distinct classes of congruent triangles. 

\begin{figure}[ht]
    \centering
    \includegraphics[scale=0.4]{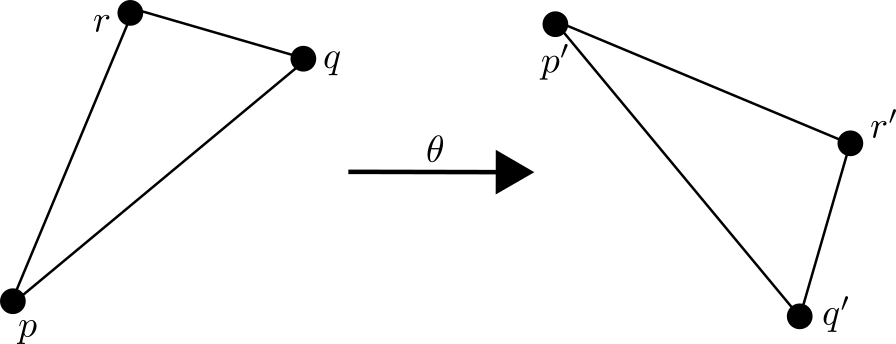}
    \caption{A pair of congruent triangles, with a rigid motion $\theta$ transforming $(p,q,r)\xrightarrow{\theta}(p',q',r')$.}
    \label{fig:CongTriangles}
\end{figure}

Like with distances, we can see that Rudnev's lower bound is sharp by looking at the $\sqrt{N} \times \sqrt{N}$ integer lattice. There are two further examples for triangles: points in an arithmetic progression on a line; or vertices of a regular polygon (see Figure \ref{fig: LineAndPolygon}).

\begin{figure}[ht]
    \centering
    \includegraphics[scale=0.4]{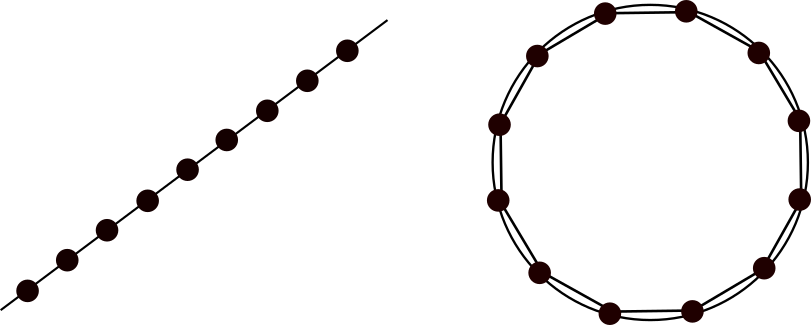}
    \caption{An arithmetic progression on a line and points at the vertices of a regular polygon.}
    \label{fig: LineAndPolygon}
\end{figure}

One can also generalise these examples, by repeating them $O(1)$ times. If $P$ is on $O(1)$ parallel lines with the same arithmetic progression on each then $P$ defines $\Theta(|P|^2)$ congruence classes of triangles. Similarly, if $P$ is on $O(1)$ concentric circles with points lying at a vertex of a scaled version of the same polygon, then we have few classes of congruent triangles.

We consider point sets with few classes of congruent triangles. We can prove much stronger structural results in this setting than those known for distances. Before we describe our result we formalise our assumption. Rudnev's sharp lower bound gives us the following definition for triangle-near-optimal point sets.
\begin{definition}
We say a finite point set $P$ in $\R^2$ is \textit{triangle-near-optimal} if, for some constant $M$, $P$ describes at most $M|P|^2$ classes of congruent triangle. 
\end{definition}
We show that all triangle-near-optimal sets are similar to the above examples in two ways: Theorem \ref{Thm: MainStructuralResult} gives us the geometric similarity. Corollary \ref{Coro: FewTrianglesLargeEnergy} shows that all triangle near optimal sets have strong additive or multiplicative structure, similar to the above examples.
\begin{theorem}\label{Thm: MainStructuralResult}
Suppose we have a finite point set $P$ in $\mathbb{R}^2$. Let $M$ be a positive constant and let $c, c'$ and $C$ be positive constants that depend only on $M$. If $P$ contains $M|P|^2$ classes of congruent triangles (triangle-near-optimal) then either
\begin{itemize}
    \item There is a line $l$ which contains $c|P|^\sigma$ points of $P$ for some $0<\sigma\leq 1$. Further, a positive proportion of $P$ is covered by lines parallel to $l$ each containing $c'|P|^\sigma$ points of $P$.
    \item There is a circle $\gamma$ which contains $C|P|$ points of $P$. 
\end{itemize}
\end{theorem}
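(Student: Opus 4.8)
The plan is to translate the geometric hypothesis---few congruence classes of triangles---into an algebraic statement about the group of rigid motions, and then apply the structural result of Petridis--Roche-Newton--Rudnev--Warren mentioned in the abstract. Counting congruence classes of triangles is the same as counting orbits of triples $(p,q,r)\in P^3$ under the diagonal action of the special Euclidean group $\SE$. The key observation is that two triples are congruent precisely when there is a rigid motion taking one to the other, so the number of congruence classes is controlled by the number of \emph{quadruples} of triples that are pairwise related by a common motion. By a standard Cauchy--Schwarz argument, having at most $M|P|^2$ classes forces a lower bound of order $|P|^4$ on the energy, i.e. on the number of solutions to $\theta(p,q)=(p',q')$ with $\theta\in\SE$, where pairs $(p,q)$ range over $P^2$. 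This energy counts incidences between pairs of points and elements of the rigid motion group, so the first step is to set up this energy precisely and extract from the triangle hypothesis a robust multiplicative/additive-energy statement living in $\SE$ or in the affine group.

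Next I would pass from the energy bound to structure on the group. The parametrisation of $\SE$ as rotations composed with translations identifies a rigid motion by an angle and a translation vector, and the stabiliser structure (the $\Stab$ notation is already set up) organises which motions fix prescribed pairs of points. The high-energy hypothesis should, via the Petridis--Roche-Newton--Rudnev--Warren theorem on the affine group $\Aff$, localise most of the mass either onto a coset of a one-parameter subgroup (the pure-translation or pure-rotation directions) or onto a bounded union of such cosets. Geometrically, a large translation component corresponds to many points sharing a common \emph{direction} of separation---this is the lattice/line regime---while a large rotation component corresponds to many points equidistant from a common centre---the circle regime. I would make this dichotomy quantitative: show that the energy being $\Omega(|P|^4)$ forces one of these two subgroups to carry a positive proportion of the incidences.

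Once the structure is localised, the final step is to read off the stated geometric conclusion. In the translation-dominated case, a positive-proportion family of pairs sharing a common difference vector means many points lie on translates of a single line, and a pigeonholing over the arithmetic structure of the translation lengths (here classical additive-combinatorial input, presumably Pl\"unnecke--Ruzsa or a Balog--Szemer\'edi--Gowers step) upgrades ``many parallel translates'' to ``$\Omega(|P|^\sigma)$ points on each of a positive proportion of parallel lines,'' which is exactly the first bullet. In the rotation-dominated case, concentration of motions fixing a common centre forces a positive proportion of $P$ onto a single circle, giving the second bullet. The main obstacle I anticipate is the passage in the middle: faithfully converting the $\SE$-energy bound into a hypothesis of the exact form required by the affine-group structure theorem, and then controlling the interaction between the rotation and translation parts---a rigid motion mixes them, so separating the ``line'' contribution from the ``circle'' contribution cleanly, without losing more than a constant proportion of the mass, is the delicate heart of the argument. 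Handling the degenerate overlap, where moderate rotation and moderate translation coexist, and showing it cannot dominate, is where I expect the real work to lie.
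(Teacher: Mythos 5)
Your overall architecture matches the paper's (triangle hypothesis $\to$ energy in the motion group $\to$ Petridis--Roche-Newton--Rudnev--Warren $\to$ line/circle dichotomy $\to$ additive combinatorics in the line case), but there is a genuine gap at the central step. Theorem \ref{Thm: PoorLinesImpliesSmallEnergy} is an \emph{upper} bound on the group energy $E(S)$ of a single finite set $S\subseteq \Aff(\C)$ in terms of its concentration on vertical and non-vertical lines; to extract structure from it you must first exhibit a concrete set $S$ of rigid motions with $|S|=\Omega(|P|)$, every element $\Omega(|P|)$-rich, and the lower bound $E(S)=\Omega(|S|^3)$. Your Cauchy--Schwarz step produces something different: a bound of order $|P|^4$ on the total incidence count $\sum_k |S_{\geq k}|k^2$ (the triangle energy spread over \emph{all} richness levels), which is a statement about the whole of $\SE$, not a group energy of any fixed finite set. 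Converting the former into the latter is exactly where the work lies, and the obvious move --- dyadic pigeonholing over richness levels $k$ --- loses a factor of $\log|P|$, which is fatal here: the line structure requires essentially optimal additive energy $E_+(P)\geq c|P|^3$, since the Behrend-type sets of Stanchescu and Erd\H{o}s--F\"uredi--Pach--Ruzsa have energy $|P|^{3-\e}$ yet no three collinear points. The paper handles this with two dedicated lemmas: a careful good/bad counting argument (Lemma \ref{Lem: ManyVeryRichActions}) showing there are $\Omega(|P|)$ values of $k\geq |P|/3CM$ with $|S_{\geq k}|\geq |P|/3M$, and a Cauchy--Schwarz/H\"older argument (Lemma \ref{Prop: LargeGroupEnergy}) giving $E(S_{\geq k})\geq k^6|S_{\geq k}|^4/(C|P|^7)$, whence $E(S)=\Omega(|S|^3)$ for one good $k$. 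Without some substitute for these two steps your plan cannot invoke the affine structure theorem at all.

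Two further corrections to the back end. First, the ``delicate heart'' you anticipate --- separating rotation mass from translation mass and controlling their overlap --- is not an issue in the actual argument: the PRNRW theorem delivers the dichotomy as a clean disjunction (a rich coset $gU_0$ of the unipotent subgroup, i.e.\ fixed rotation part, or a rich coset $gT(z)$ of a point stabiliser), and both alternatives may even hold simultaneously (the paper notes this for a progression union a regular polygon) with no harm done. Second, your endgame is miscalibrated in both cases: in the circle case the paper does not pass through energy at all, but shows $gT(z)\cap S$ lies in a coset $\theta(T(z)\cap\SE)$ whose orbits are circles (Lemmas \ref{Lem: CosetInSE} and \ref{Lem: OrbitsAreCircles}) and pigeonholes the $\Omega(|P|^2)$ incidences onto a single orbit; in the line case, Pl\"unnecke--Ruzsa is not sufficient --- one needs Balog--Szemer\'edi--Gowers followed by the Green--Ruzsa Freiman theorem (via Mudgal's argument, Theorem \ref{Theorem: PointLineStructure}), since it is the proper coset progression in $\R^2$, decomposed into translates of its longest one-dimensional subprogression, that simultaneously produces the single rich line \emph{and} the positive proportion of $P$ covered by parallel rich lines.
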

We also give sharp energy bounds for such sets, see the slightly more general Theorem \ref{Thm: RichActionsGiveStructure} below.

Theorem \ref{Thm: MainStructuralResult} solves Question \ref{Question: PoorLine} of Erd\H{o}s for sets with few classes of congruent triangles. Unfortunately, while triangles and distances are similar from the point of view of rigid motions, we cannot prove a direct relationship between sets with few triangles and those with few distances. This leads to the following question, a positive answer to which would fully resolve Question \ref{Question: PoorLine} of Erd\H{o}s.
\begin{question}
Let $P$ be a finite point set in $\R^2$ with at most $c|P|/\sqrt{\log|P|}$ distinct distances. Does $P$ describe at most $M|P|^2$ classes of congruent triangles for some constant $M$? The constant $M$ may depend on $c$.
\end{question}
We will discuss how our methods fail to answer this question after describing the proof of Theorem \ref{Thm: MainStructuralResult} (see Question \ref{Question: ActionDistn}).
The proof of Theorem \ref{Thm: MainStructuralResult} consists of several steps combining the Elekes-Sharir--Guth-Katz framework and structure of the affine group with the traditional tools of Balog-Szemer\'edi-Gowers and the Freiman-type result of Green and Ruzsa from additive combinatorics. Before we outline the proof, we need to introduce some useful notions.

Let $P$ be a finite point set in $\R^2$, $M$ a positive constant (constants do not grow with $|P|$). We consider the set of classes of congruent triangles determined by $P$, defined as 
\begin{equation*}\label{Def: TriangleSet}
     T(P) = \{ (|p-q|,|q-r|,|p-r|): p,q,r \in P \}.
\end{equation*}
To apply the tools from additive combinatorics, we interpret additive structure in $P$ using the additive energy of $P$. The additive energy is defined as
\begin{equation*}
E_+(P)=|\{(p_1,p_2,p_3,p_4)\in P^4:p_1+p_2=p_3+p_4\}|.
\end{equation*}
To interpret multiplicative structure, we identify $\R^2$ with $\C$ in the usual way, so that we can think of $P$ as a subset of $\C$. This allows us to define the multiplicative energy of $P$ as 
\begin{equation*}
E_\times(P)=|\{(p_1,p_2,p_3,p_4) \in P^4 \subset \C^4 : p_1p_2=p_3p_4\}|.
\end{equation*}
This identification also allows us to interpret the group of rigid motions on $\R^2$, which we denote by $\SE$, as a subgroup of the affine group on $\C$, denoted $\Aff(\C)$. For technical calculations we use the isomorphism $ \Aff(\C) \cong \C^\times \ltimes \C$. For details of this isomorphism, and the action of $\Aff(\C)$ on $\C$ see the start of Section \ref{Sec: LargeGroupEnergyGivesLargePointEnergy}.
% We use $S^1$ to denote the unit circle in $\C$. We also use the isomorphism $\SE \cong S^1 \ltimes \C$.
We call a given rigid motion $\theta \in \SE$ $k$-rich (with respect to the point set $P$) if
$$ |P\cap \theta P| = k.$$

Finally, we introduce group energy, which plays a central role in the proof. Let $S$ be a subset of a multiplicative group $G$. Then we define its group energy as 
\begin{equation*}\label{def: ElekesEnergy}
    E(S) = |\{ (g_1,g_2,g_3,g_4) \in S^4: g_1g_2^{-1}=g_3g_4^{-1}\}|.
\end{equation*}
Throughout the paper we should think of $S$ as a set of rigid motions within the affine group, so $S \subseteq \SE \leq \Aff(\C)$. Throughout, we use $\theta$ to denote an element of $\SE$, a rigid motion. We will use $g$ and $h$ to denote more general elements of the affine group.

The proof is broken down into four main steps, a rough outline is the following.
\begin{enumerate}
    \item[\textbf{Step 1}] (Few triangles give many rich actions) If $P$ has $M|P|^2$ congruence classes of triangles then there are roughly $|P|$ members of $\SE$ which are $|P|$-rich.
    \item[\textbf{Step 2}] (Many rich actions lead to a large group energy) If $S$ is the large set of rich rigid motions, then its group energy is as large as possible.
    \item[\textbf{Step 3}] (Rich symmetries with large group energy gives $P$ large energy) Large group energy gives large intersection between very-rich rigid motions and a `nice' coset of $\Aff(\C)$. This large intersection means either the additive or multiplicative energy of $P$ is large.
    \item[\textbf{Step 4}] (Structure in the affine group gives $P$ structure) We use the large coset intersection from Step 3. We obtain the circle structure using a coset of the affine group directly. For the line structure, we have to pass through the additive energy using Balog-Szemer\'edi-Gowers and Freiman type results.
\end{enumerate}
The first and second steps require some delicacy to ensure that there are no log-type losses, see Lemma \ref{Lem: ManyVeryRichActions}. The third step is an application of a Balog-Szemer\'edi-Gowers-type result in the affine group due to Petridis-Roche--Newton-Rudnev-Warren \cite{PRNRW22}. Step four is divided into a multiplicative and additive cases. The multiplicative case uses only the structure of cosets of $\SE$. The additive case we require the application of Balog-Szemer\'edi-Gowers and a result of Mudgal \cite{Akshat19} that relies on the Freiman-type result of Green-Ruzsa \cite{green2007freiman}.

We summarise steps 2 to 4 in the following Theorem.
\begin{theorem}\label{Thm: RichActionsGiveStructure}
Suppose that $P$ is a point set in $\R^2$, let $C_1$ and $C_2$ be positive constants. Suppose there is a set $S$ in $\SE$ of $C_1|P|$ actions, each of which are $(C_2|P|)$-rich with respect to the points in $P$. Then at least one of the following holds:
\begin{enumerate}
    \item 
    There is a sharp energy bound $\ds E_+(P) \geq C_3|P|^3$. This energy bound provides the structure:\\
    There is a line $l$ which contains $\Omega(|P|^\sigma)$ points of $P$ for some $0<\sigma\leq 1$. Further, a positive proportion of $P$ is covered by lines parallel to $l$ each containing $\Omega(|P|^\sigma)$ points of $P$.

    \item There is a $t \in \R^2$ such that the sharp energy bound $\ds E_\times(P-t)\geq C_4|P|^3$ holds. Using the affine group cosets directly we can show:\\
    There is a circle $\gamma$ which contains $\Omega(|P|)$ points of $P$.
        
\end{enumerate}
\end{theorem}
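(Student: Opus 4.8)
The plan is to carry out the four-step programme with $\SE\le\Aff(\C)$, deducing the dichotomy from which abelian subgroup the rich motions concentrate on. Write $\rho(g)=|P\cap g^{-1}P|$, so the hypothesis says $|S|=C_1|P|$ and $\rho(\theta)\ge C_2|P|$ for all $\theta\in S$. First I would establish the maximal group energy $E(S)\gtrsim|P|^3$ by double counting the quadruples $(\theta_1,\theta_2,p,p')\in S^2\times P^2$ with $\theta_1p=\theta_2p'$. Grouping by the common image $\theta_1p=\theta_2p'\in P$ and applying Cauchy--Schwarz over $P$ bounds this count below by $\gtrsim(\sum_{\theta\in S}\rho(\theta))^2/|P|\gtrsim|P|^3$; grouping instead by $h=\theta_2^{-1}\theta_1\in S^{-1}S$ rewrites it as $\sum_h r_{S^{-1}S}(h)\rho(h)$. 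A popularity pigeonhole then isolates a family $H$ of products that are themselves $\gtrsim|P|$-rich and carry almost all the weight, $\sum_{h\in H}r_{S^{-1}S}(h)\gtrsim|P|^2$. The key input is that $H$ is small: the log-free Guth--Katz bound \cite{GK15} on the number of $k$-rich rigid motions gives $|H|\lesssim|P|^3/|P|^2=|P|$, and one more Cauchy--Schwarz yields $E(S)\ge\sum_{h\in H}r_{S^{-1}S}(h)^2\gtrsim|P|^3$. Controlling $|H|$ with no logarithmic loss is the delicate point behind Lemma \ref{Lem: ManyVeryRichActions}.

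I would then feed $E(S)\gtrsim|S|^3$ into the Balog--Szemer\'edi--Gowers-type structural theorem for the affine group of Petridis--Roche-Newton--Rudnev--Warren \cite{PRNRW22}. As the abelian subgroups of $\Aff(\C)\cong\C^\times\ltimes\C$ are, up to conjugacy, the translation subgroup and the point-stabilisers $\Stab(t)$, this returns a positive proportion of $S$ --- still $\gtrsim|P|$ motions, each $\gtrsim|P|$-rich --- inside a single coset of one of these two types. A translation coset consists of the motions with one fixed rotation part $\omega_0$, and the $\gtrsim|P|$ popular values of $b$ with $|P\cap(\omega_0P+b)|\gtrsim|P|$ force the cross-energy $\sum_b|P\cap(\omega_0P+b)|^2\gtrsim|P|^3$; by Cauchy--Schwarz and the scaling-invariance $E_+(\omega_0P)=E_+(P)$ this upgrades to $E_+(P)\ge C_3|P|^3$. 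A coset of $\Stab(t)$ consists of rotations about $t$, and the identical computation in the coordinate $z\mapsto z-t$ gives $E_\times(P-t)\ge C_4|P|^3$.

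It remains to extract geometry from each case. In the multiplicative case I would argue straight from the coset: listing the rich rotations about $t$ as $\theta_1,\dots,\theta_m$ with $m\gtrsim|P|$, the incidence sum $\sum_j|P\cap\theta_jP|\gtrsim|P|^2$, while for each fixed $p\in P$ distinct $\theta_j$ send $p$ to distinct points of $P$ on the circle $\gamma_p$ centred at $t$ through $p$; hence $\sum_\gamma|P\cap\gamma|^2\gtrsim|P|^2$ over circles about $t$, and since $\sum_\gamma|P\cap\gamma|=|P|$ some circle carries $\Omega(|P|)$ points. In the additive case I would apply Balog--Szemer\'edi--Gowers to $E_+(P)\gtrsim|P|^3$ to find $P'\subseteq P$ with $|P'|\gtrsim|P|$ and bounded doubling, then pass through the planar Freiman-type result of Mudgal \cite{Akshat19}, which rests on Green--Ruzsa \cite{green2007freiman}, to place $P'$ in a bounded-rank generalised arithmetic progression; its longest defining progression is a line carrying $\Omega(|P|^\sigma)$ points, and its parallel translates cover a positive proportion of $P$, each with $\Omega(|P|^\sigma)$ points.

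I expect the main obstacle to be the passage through the affine group: one has to present the maximal group energy in the exact shape that \cite{PRNRW22} consumes, correctly decide which of the two conjugacy classes of abelian subgroup the refined set lands in, and convert coset membership into precisely one of $E_+(P)$ or $E_\times(P-t)$ while retaining a positive proportion of rich motions. The additive endgame is the most technical sub-problem, since converting bounded doubling in $\R^2$ into the stated parallel-line conclusion needs the genuinely two-dimensional Freiman structure furnished by Mudgal's theorem rather than a one-dimensional progression.
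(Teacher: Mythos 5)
Most of your outline is sound and runs parallel to the paper: your derivation of the group energy bound (double counting quadruples, a popularity pigeonhole isolating the $\gtrsim|P|$-rich elements $H\subseteq S^{-1}S$, the log-free Guth--Katz bound $|H|\lesssim |P|$, then Cauchy--Schwarz) is a valid alternative to the paper's Lemma \ref{Prop: LargeGroupEnergy}, which instead reaches $E(S)\geq k^6|S|^4/(C|P|^7)$ via H\"older against the triangle-energy bound $E_T(P)\leq C|P|^4$; your use of Theorem \ref{Thm: PoorLinesImpliesSmallEnergy}, your additive-case energy computation (cross energy $E_+(P,\omega_0 P)\gtrsim |P|^3$ upgraded by Cauchy--Schwarz and rotation invariance), and the Balog--Szemer\'edi--Gowers plus Green--Ruzsa/Mudgal endgame all match Sections \ref{Sec: LargeGroupEnergyGivesLargePointEnergy} and \ref{Sec: AdditiveStructure}.

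There is, however, a genuine gap in your torus case: you assert that ``a coset of $\Stab(t)$ consists of rotations about $t$.'' This is false except for the trivial coset. What Theorem \ref{Thm: PoorLinesImpliesSmallEnergy} hands you is a rich \emph{non-vertical line}, i.e.\ a coset $gT(z)$ with $g\in\Aff(\C)$ arbitrary, and after Lemma \ref{Lem: CosetInSE} a coset $\theta(T(z)\cap\SE)$ with $\theta\in S$. Unless $\theta\in T(z)$ this coset is not a subgroup and its elements fix no common point; indeed the paper shows it contains exactly one translation, and a nontrivial translation is a rotation about no point at all. Consequently both of your case-2 deductions are unjustified as written: the ``identical computation in the coordinate $z\mapsto z-t$'' giving $E_\times(P-t)\geq C_4|P|^3$, and the claim that the $\gtrsim|P|$ images of a popular $p$ lie on ``the circle $\gamma_p$ centred at $t$ through $p$.'' The paper's proof exists precisely to repair this. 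For the energy, Lemma \ref{Lem: TechnicalTorusLemma} shows that for $\theta=gh$ with $h\in T(\zeta)$ and $gh\cdot p=q$ (and $p\neq\zeta$, which costs you $k-1$ in place of $k$ representations) one has $(q-g\cdot\zeta)/(p-\zeta)=g_1h_1$ --- a ratio of two \emph{differently} shifted copies of $P$ --- so one only gets $C_3|P|(k-1)^2\leq E_\times^{1/2}(P-g\cdot\zeta)\,E_\times^{1/2}(P-\zeta)$ and must then choose $t\in\{\zeta,\,g\cdot\zeta\}$; in particular the shift $t$ in the conclusion need not be the fixed point of the torus. For the circle, Lemma \ref{Lem: OrbitsAreCircles} proves that the orbit of $p$ under the full coset $\theta(T(z)\cap\SE)$ is nevertheless a circle, centred at $\tau\cdot z$ where $\tau$ is the unique translation in the coset, with radius $\norm{p-z}$; with that geometric input your handshake/pigeonhole count goes through essentially verbatim (it is Proposition \ref{Prop: LargeTorusCapGivesRichCircle}). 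So your counting skeleton survives, but the common-centre assumption must be replaced by these two coset computations, without which case 2 of the theorem is unproved.
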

To ensure readability of the proof we will focus on the constants seen in the triangle case only. Theorem \ref{Thm: RichActionsGiveStructure} can be proved by using the constants $C_1$ and $C_2$ in place of those provided by Corollary \ref{Coro: SetWithLargeEnergy} in Sections \ref{Sec: RichActionsGiveLargeEnergy}, \ref{Sec: LargeGroupEnergyGivesLargePointEnergy} and \ref{Sec: StructureInP}.

The energy bounds gained here are enough--under the triangle assumption--to resolve the energy problem of Katz \cite[Problem 34]{sheffer2014openproblems}. 

We emphasise that the bound on the additive energy $E_+(P)$ was necessary to demonstrate line structure, however the bound on the multiplicative energy $E_\times(P)$ is not required to demonstrate circle structure. Once we rely on the energy to get structure, there are results of Stanchescu \cite{stanchescu2002planar} and Erd\H{o}s-F\"uredi-Pach-Ruzsa \cite{gridRevisited} that show that we need essentially-optimal energy bounds to have any chance of structure. We discuss this further in Section \ref{Sec: AdditiveStructure}.

For distances, step 1 fails. Theorem \ref{Thm: RichActionsGiveStructure} requires at least $c_1|P|$ actions each being $(c_2|P|)$-rich. For a set with at most, say, $|P|/\log|P|$ distinct distances (which is near minimal) our methods cannot guarantee many $|P|$-rich actions. Indeed, it is not clear that a minimal distance set must contain many very rich actions.
When looking at distances, unlike the triangle case, there cannot be examples where all actions are $(c_2|P|)$-rich (as such sets have to have at least $|P|$ distances). This leads to the following question.
\begin{question}\label{Question: ActionDistn}
If $P$ is a finite point set in $\R^2$ with $o(|P|)$ distinct distances, can one say anything about the distribution of the $k$-rich actions for $k=|P|^\sigma$, $0< \sigma\leq 1$?
\end{question}
It is a quick calculation using the Guth-Katz bound \cite[Proposition 2.5]{GK15} to show that $k$-rich actions with $k\leq \log|P|$ or $k\geq |P|/2$ cannot provide a large enough energy for a set with such few distances, so some sort of polynomially-rich actions are necessary. We can show that there is at least one $\sigma$ in the range $0<\sigma \leq 1$ which achieves the Guth-Katz bound. Our method is not quantitative, so the best group energy bound we can get is $E(S) = \Omega( |S|^{2+1/3} )$. One would need an exponent better than $2+1/2$ for our methods to give any structure in $P$.

In the square lattice example, each $k$ has the maximum number of $k$-rich actions i.e. for all $k$ we have $|S_{\geq k}| = \Theta(|P|^3k^{-2})$. We do not have enough evidence to conjecture that such a distribution holds across all near-optimal sets. Any examples disproving this would be of great interest. We also note that our methods gives better energy bounds on $P$ as the richness of the group actions increases. For the structural results it is necessary for essentially-optimal energy bounds, so we can afford no polynomial loss in the richness of our actions.
\subsubsection*{Acknowledgements}
The authors would like to thank Misha Rudnev and Oleksiy Klurman for suggesting the problem and for helpful discussions. The second author would like to thank Akshat Mudgal for a very helpful conversation and for pointing out \cite[Theorem 1.5]{Akshat19}. We would like to thank the Heilbronn Institute for Mathematical Research for supporting the Focused Research Workshop ``Testing Additive Structure'' where we received many helpful comments and input. We would especially like to thank Brandon Hanson for pointing out the examples in \cite{gridRevisited} and \cite{stanchescu2002planar}. The authors would like the thank the anonymous
referees for a careful reading of the manuscript and helpful comments.

\section{Few Triangles Give Many Rich Actions}\label{Sec: FewTrianglesMAnyRichActions}

In this section we detail the first step of our proof: That sets with few classes of congruent triangles have many very-rich actions.
For a point set $P$ and a positive integer $k$, recall that an element $\theta$ in $\SE$ is a $k$-rich rigid motion if $|P \cap \theta P| = k$. Define $S_{\geq k}=S_{\geq k}(P)$ to be the elements of $\SE$ that are at least \(k\)-rich rigid motions of $P$. 
We state the incidence result of Guth-Katz \cite[Proposition 2.5]{GK15} with an explicit constant $C$ as
\begin{equation}\label{Eq: GKExplicitConstant}
    |S_{\geq k}(P)| \leq C\frac{|P|^3}{k^2}.
\end{equation}
We only need that $C$ is finite, see \cite{kollar2015GuthKatz} for explicit constants. Using this we can prove the following.

\begin{lemma}\label{Lem: ManyVeryRichActions}
Suppose we have a point set $P$ in $\R^2$ with exactly $M|P|^2$ congruence classes of triangles for some constant $M$. Let $C$ be the Guth-Katz constant. Then we have at least $(3CM)^{-3}|P|$ values of $k$ such that both the following hold:
\begin{itemize}
    \item $\ds |P| \geq k\geq |P|/3CM$, and
    \item $\ds |S_{\geq k}(P)| \geq \frac{|P|}{3M}.$
\end{itemize}
\end{lemma}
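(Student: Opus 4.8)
The plan is to convert the hypothesis ``few triangle classes'' into a lower bound on a cubic energy of rigid motions, and then read off the distribution of rich motions from the Guth--Katz bound (\ref{Eq: GKExplicitConstant}) together with the monotonicity of $k \mapsto |S_{\ge k}(P)|$.

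First I would set up the triangle energy. Let $m(\theta) = |P \cap \theta^{-1}P|$ count the points of $P$ that $\theta$ carries into $P$, and let $Q$ be the number of ordered pairs of congruent ordered triples from $P$. Grouping triples by their congruence class and applying Cauchy--Schwarz against $|T(P)| \le M|P|^2$ gives $Q \ge |P|^6/|T(P)| \ge |P|^4/M$. On the other hand, for any ordered triple with at least two distinct points the orientation-preserving rigid motion carrying it to a congruent triple is unique, so $Q = \sum_{\theta \in \SE} m(\theta)^3$ up to the negligible $O(|P|^2)$ contribution of totally degenerate (single-point) triples. Thus $\sum_{\theta} m(\theta)^3 \ge |P|^4/M - O(|P|^2)$, which for large $|P|$ I may treat as $\gtrsim |P|^4/M$.

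Next I would pass to the level sets. Writing $m^3 = \sum_{j=1}^m (3j^2 - 3j + 1)$ and summing over $\theta$ gives the exact identity $\sum_\theta m(\theta)^3 = \sum_{k\ge1}(3k^2-3k+1)\,|S_{\ge k}(P)|$. The Guth--Katz bound (\ref{Eq: GKExplicitConstant}) caps each level: $(3k^2-3k+1)\,|S_{\ge k}(P)| \le 3C|P|^3$. The essential point --- and the reason we work with the cube rather than the square of $m(\theta)$ --- is that this per-level cap is \emph{uniform} in $k$, so summing it over a range of length $\ell$ costs only $3C|P|^3\ell$ with no harmonic factor; the analogous square energy $\sum_\theta m(\theta)^2 = \sum_k(2k-1)|S_{\ge k}(P)|$ would instead produce a $\sum_k 1/k \sim \log|P|$ loss, which we cannot afford.

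The key structural observation is that $k \mapsto |S_{\ge k}(P)|$ is non-increasing, so the set of $k$ with $|S_{\ge k}(P)| \ge |P|/3M$ is an initial interval $[1,k^\ast]$, and the values of $k$ meeting \emph{both} conditions of the lemma are precisely the integers in $[\,|P|/3CM,\,k^\ast]$. It therefore suffices to show $k^\ast$ is large. For this I would argue by energy balance: if $|S_{\ge k}(P)| < |P|/3M$ for every $k > k^\ast$, then the tail of the identity is at most $3\cdot\tfrac{|P|}{3M}\sum_{k\le|P|}k^2 \le |P|^4/3M$ (using $\sum_{k\le|P|}k^2 \le |P|^3/3$), while the head $\sum_{k\le k^\ast}$ is at most $3C|P|^3 k^\ast$ by the per-level cap. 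Comparing with the lower bound $\sum_\theta m(\theta)^3 \gtrsim |P|^4/M$ forces $k^\ast$ to be a definite multiple of $|P|/CM$, and hence the interval $[\,|P|/3CM,\,k^\ast]$ to contain at least $(3CM)^{-3}|P|$ integers.

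I expect the main obstacle to be the constant bookkeeping in this last step. The tail bound and the head cap are each individually of the same order as the total energy $|P|^4/M$, so the argument has essentially no slack: the cutoffs $|P|/3CM$ and $|P|/3M$ and the use of $\sum_{k\le|P|}k^2 \le |P|^3/3$ must be balanced precisely, and it is exactly here that any logarithmic loss (had we used the square energy) or any polynomial loss in the richness would be fatal. A secondary but routine point is the careful treatment of degenerate triangles --- collinear triples and triples with repeated points --- in order to justify the clean identity $Q = \sum_\theta m(\theta)^3$; these contribute only lower-order terms and do not affect the final count.
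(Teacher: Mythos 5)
Your overall route is the same as the paper's (Cauchy--Schwarz against $|T(P)|\leq M|P|^2$ to lower-bound a cubic energy of rigid motions, a level-set decomposition over $S_{\geq k}$, the uniform per-level cap $k^2|S_{\geq k}|\leq C|P|^3$ from Guth--Katz, and a head/tail comparison; your monotonicity observation that the good $k$ form an interval is a clean repackaging of the paper's good/$1$-bad/$2$-bad contradiction). But there is a genuine gap in the final step, and it is exactly the factor $3$ you introduce by counting ordered triples. Your identity gives weights $3k^2-3k+1\leq 3k^2$, so your per-level cap is $3C|P|^3$ and your tail bound is $3\cdot\frac{|P|}{3M}\cdot\frac{|P|^3}{3}=\frac{|P|^4}{3M}$. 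The energy balance then yields
\begin{equation*}
3C|P|^3k^\ast \;\geq\; \frac{|P|^4}{M}-\frac{|P|^4}{3M}\;=\;\frac{2|P|^4}{3M},
\qquad\text{i.e.}\qquad k^\ast \;\geq\; \frac{2|P|}{9CM},
\end{equation*}
which is \emph{strictly below} the left endpoint $|P|/3CM=3|P|/9CM$ of the interval you need. So your argument does not show the interval $[\,|P|/3CM,\,k^\ast]$ is nonempty, let alone that it contains $(3CM)^{-3}|P|$ integers; as you yourself note, the accounting has no slack, and the factor $3$ is fatal. The paper avoids this precisely by weighting with the increments $\binom{k}{3}-\binom{k-1}{3}=\binom{k-1}{2}\leq k^2$ (no factor $3$), arriving at \eqref{Eq: RigidMotionsIneq}: $|P|^4/M\leq \sum_k |S_{\geq k}|k^2$. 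If you rerun your head/tail comparison with weight $k^2$, you get the tail $\leq \frac{|P|}{3M}\cdot\frac{|P|^3}{3}=\frac{|P|^4}{9M}$ and head $\leq C|P|^3k^\ast$, hence $k^\ast\geq \frac{8|P|}{9CM}$, an interval of length at least $\frac{5|P|}{9CM}\geq (3CM)^{-3}|P|$ (using $CM\geq 1$, which follows from comparing \eqref{Eq: RigidMotionsIneq} with \eqref{Eq: RudnevTriangle}). So the fix is to match the paper's normalisation of the level-set weights; with your $3k^2$ the stated constants are simply unreachable by this comparison, since Guth--Katz is the only per-level input.

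A secondary caveat: your claim that every pair of congruent triples with two distinct points is realised by a \emph{unique orientation-preserving} motion is false for chiral pairs -- a triangle and its mirror image share the same distance triple but are related only by an orientation-reversing isometry, which is not in $\SE$. This can cost you a further factor $2$ in the lower bound on $\sum_\theta m(\theta)^3$ (making the numerology above even worse), and should be handled by also applying Guth--Katz to the rich motions between $P$ and a reflected copy of $P$. The paper glosses the same point when asserting $E_T(P)=\sum_k|S_{=k}|\binom{k}{3}$, so this is a shared informality rather than a defect unique to your write-up, but since the lemma's content is its explicit constants, it deserves a sentence.
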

When we use this lemma later we will only need one such value of $k$, however it is easier to prove the above.

The proof of Lemma \ref{Lem: ManyVeryRichActions} uses the initial step of the Elekes-Sharir--Guth-Katz framework. We recall the necessary steps prior to beginning the proof.
\subsection{Counting Energy Using Rich Rigid Motions}
For a point set $P$ define its \textit{triangle energy} as
$$ E_T(P) = |\{ (p,q,r,p',q',r') \in P^6 : \text{ tri. } pqr \text{ congruent to tri. } p'q'r'\}|.$$
We aim to rephrase this energy in terms of $k$-rich rigid motions. We can then use Cauchy-Schwarz and the fact that we have very few classes of congruent triangles to give an essentially-optimal lower bound. See the forthcoming equation \eqref{Eq: RigidMotionsIneq}.

We note the earlier observation of Rudnev \cite{R12}, that two triangles are congruent if and only if there is a rigid motion taking one to the other. See Figure \ref{fig:CongTriangles}. Thus, we can count the triangle energy by counting the number of rigid motions weighted by how rich these motions are. Indeed, note that if $\theta$ is $k$-rich then there are ${k\choose 3}$ triples of the form $(p,q,r,\theta p,\theta q, \theta r)$.
Letting $S_{=k}$ be the set of exactly $k$-rich rigid motions we can count the triangle energy as
$$ E_T(P) = \sum_{k=3}^{|P|}|S_{=k}|{k \choose 3}. $$
We change this sum to use $S_{\geq k}$, the number of at least $k$-rich rigid motions, using that $|S_{=k}| = |S_{\geq k}| - |S_{\geq k+1}|$. One then notes that the term $|S_{\geq k}|$ occurs in the sum with weight
$$ {k \choose 3} - {k-1 \choose 3} \leq k^2.$$
Thus we have that
$$ E_T(P) \leq \sum_{k=3}^{|P|}|S_{\geq k}|k^2.$$

To obtain the lower bound on this energy, we use Cauchy-Schwarz. For a triangle class $t$ in $T(P)$ define $r(t)$, its number of realisations in $P$, as
$$ r(t) = |\{ (p,q,r) \in P^3 : \text{ triangle } pqr \text{ is congruent to } t \}|. $$
We then have that
\begin{equation}\label{Eq: CauchySchwarzStep}
    |P|^6 = \left( \sum_{t\in T(P)} r(t)\right)^2 \leq |T(P)|\sum_t r^2(t).
\end{equation}
Notice that the final sum here is the same as the size of six-tuples $(p,q,r,p',q',r')$ where the triangles $pqr$ and $p'q'r'$ are congruent. So this is exactly the triangle energy.

Using this, along with the earlier energy upper bound, one has
\begin{equation}\label{Eq: RigidMotionsIneq}
    |P|^6 \leq |T(P)|\sum_{k=3}^{|P|} |S_{\geq k}|k^2.
\end{equation}
This is the bound we will need for the remainder of the section. For completion, note that Rudnev's lower bound follows by the application of the Guth-Katz bound, see \eqref{Eq: GKExplicitConstant}. This gives
\begin{equation}\label{Eq: RudnevTriangle}
    \sum_{k=3}^{|P|} |S_{\geq k}|k^2 \leq C|P|^4.
\end{equation}
\subsection{Proof of Lemma \ref{Lem: ManyVeryRichActions}}
With \eqref{Eq: RigidMotionsIneq} established, we are ready to begin the proof of Lemma \ref{Lem: ManyVeryRichActions}.
% Note: Since we have $S_{\geq k+1} \subseteq S_{\geq k}$ etc. this doesn't guarantee a large number of k with different rich actions. The number of $k$ tells you a minimum distance your $k$ needs to be from $|P|$, as richness lower down percolates up the chain. Increasing the lower bound on $k$ would thus give an improvement (even-richer actions). 
\begin{proof}[Proof of Lemma \ref{Lem: ManyVeryRichActions}]

As our $k$ represent the richness of rigid motions acting on $P$, it is natural to only consider $k$ in the range $2 \leq k \leq |P|$. Indeed, we need $k \geq 2$ to ensure $S_{\geq k}$ is finite and for all $\theta$ we have $|P\cap \theta P| \leq |P|$. We call a $k$ good if both 
\begin{itemize}
    \item $\ds |P| \geq k\geq |P|/3CM$, and
    \item $\ds |S_{\geq k}(P)| \geq \frac{|P|}{3M}.$
\end{itemize}
Let $X$ be the number of good $k$. The bad $k$ are in one of two cases.
%$$\neg(A \wedge B) \equiv \neg A \vee \neg B \equiv (\neg A \wedge B) XOR (A \wedge \neg B) $$
In the first case $k\geq |P|/3CM$ and $|S_{\geq k}(P)|$ is smaller than the threshold. We call such values of $k$ $1$-bad with their total number being $Y_1$. The second case is when $k<|P|/3CM$ with no assumption on $|S_{\geq k}|$. We call such values of $k$ $2$-bad with their total number being $Y_2$. These cases partition all values of $k$ in the range $2\leq k \leq |P|$, so $X+Y_1+Y_2=|P|-1$.
We suppose, for contradiction, that $X < |P|/27M^3C^3$.

As our set has $M|P|^2$ triangles, \eqref{Eq: RigidMotionsIneq} gives us that
\begin{equation}
    \frac{|P|^4}{M} \leq \sum_{k=1}^{|P|} |S_{\geq k}|k^2.
\end{equation}
Notice that \eqref{Eq: RudnevTriangle} shows that the lower bound here is essentially the best possible.

We split the sum up into the sum over the good $k$ and the sum over the two bad sets. For the sum of the good values of $k$ we use (\ref{Eq: GKExplicitConstant}). With $k \geq |P|/3MC$ this gives $|S_{\geq k}| \leq 9M^2C^3|P|$. Thus, using $k\leq |P|$ we have,
$$  \sum_{k \text{ good}} |S_{\geq k}|k^2 \leq X\cdot9M^2C^3|P| \cdot |P|^2 < \frac{|P|^4}{3M}.$$
With the last estimate using our assumption that $X < |P|/27M^3C^3$. 

We now estimate the contribution from the $1$-bad $k$. As each term is $1$-bad we know that
$$ |S_{\geq k}| < \frac{|P|}{3M}.$$
Using this, and again $k\leq |P|$, we have that
$$ \sum_{k~1\text{-bad}} |S_{\geq k}|k^2 \leq Y_1\cdot\left(\frac{|P|}{3M}\right)\cdot |P|^2. $$
To conclude this case it suffices to show that
$$  Y_1\left(\frac{|P|^3}{3M}\right) < \frac{|P|^4}{3M}.$$
This follows as $k$ can only exist in the range $2 \leq k \leq |P|$, so the number of 1-bad such $k$ is strictly less than $|P|$ i.e. $Y_1 \leq |P|-1 < |P|$.

We now have to deal with the $2$-bad values of $k$. Being 2-bad means that $k < |P|/3CM$. We use (\ref{Eq: GKExplicitConstant}) to bound each $|S_{\leq k}|$, obtaining
$$ \sum_{k~2\text{-bad}} |S_{\geq k}|k^2 \leq Y_2\cdot C|P|^3 < \frac{|P|^4}{3M}.$$
The final inequality using $Y_2 < |P|/3CM$, as this is the range of $k$ in the 2-bad case.
Thus, under the assumption that $X < |P|/27M^3C^3$, we have that
$$ \frac{|P|^4}{M} \leq \sum_{k=1}^{|P|} |S_{\geq k}|k^2 < \frac{|P|^4}{M},$$
this contradiction completes the proof.
\end{proof}

\section{Many Rich Actions Give Large Group Energy}\label{Sec: RichActionsGiveLargeEnergy}
We show that if we have very few classes of congruent triangles then there must be a large set of rigid motions that has very large group energy. See the forthcoming Corollary \ref{Coro: SetWithLargeEnergy}. The proof of this relies on a technical inequality which we state as a lemma.
\begin{lemma}\label{Prop: LargeGroupEnergy}
Let $S_{\geq k}$ be a set of at least $k$-rich rigid motions in $\SE$. Then
$$ \frac{k^6|S_{\geq k}|^4}{C|P|^7} \leq E(S_{\geq k}). $$
\end{lemma}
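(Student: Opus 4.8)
The plan is to lower-bound the group energy $E(S_{\geq k})$ by exhibiting many additive-type coincidences in the group $\SE$ that come from the richness of each rigid motion. Recall that $E(S_{\geq k})$ counts quadruples $(g_1,g_2,g_3,g_4)\in S_{\geq k}^4$ with $g_1g_2^{-1}=g_3g_4^{-1}$, equivalently the number of pairs of pairs $(g_1,g_2),(g_3,g_4)$ sharing a common ``difference'' $g_1g_2^{-1}$. So the natural strategy is to count, for each group element $h\in\SE$, the number of representations $h=g_1g_2^{-1}$ with $g_1,g_2\in S_{\geq k}$, call it $r(h)$, and then apply Cauchy--Schwarz in the form
$$
E(S_{\geq k}) = \sum_{h} r(h)^2 \geq \frac{\left(\sum_h r(h)\right)^2}{|\{h : r(h)>0\}|}.
$$
The numerator $\sum_h r(h) = |S_{\geq k}|^2$ is immediate, so the whole problem reduces to producing a good \emph{upper} bound on the support size $|\{h : r(h)>0\}|$, i.e. on the size of the quotient set $S_{\geq k}\,S_{\geq k}^{-1}$.

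First I would interpret $r(h)$ geometrically. If $h = g_1 g_2^{-1}$ with $g_1,g_2$ each $k$-rich, then $g_1$ maps a size-$k$ subset of $P$ into $P$ and $g_2^{-1}$ maps $P$ onto a size-$k$ subset, so the composite $h$ must itself share a large intersection with $P$: one expects $P\cap hP$ to be controlled from below by the richness $k$. The key step is to show that every $h$ appearing as a quotient $g_1g_2^{-1}$ is itself fairly rich, say at least $(k^2/|P|)$-rich or something of that order, since $g_1$ carries $k$ points of $P$ into $P$ and $g_2$ does likewise, and composing forces an overlap whose size is at least $k^2/|P|$ by a pigeonhole/counting argument on the common target points. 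Once I have a richness lower bound of the shape $|P\cap hP|\gtrsim k^2/|P|$ for every $h$ in the support, I can invoke the Guth--Katz incidence bound \eqref{Eq: GKExplicitConstant}, which caps the number of at-least-$k'$-rich motions by $C|P|^3/k'^2$. Substituting $k' \approx k^2/|P|$ gives
$$
|\{h : r(h)>0\}| \leq \left|S_{\geq k^2/|P|}\right| \leq C\frac{|P|^3}{(k^2/|P|)^2} = C\frac{|P|^5}{k^4}.
$$

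Combining the two ingredients yields
$$
E(S_{\geq k}) \geq \frac{|S_{\geq k}|^4}{C|P|^5/k^4} = \frac{k^4|S_{\geq k}|^4}{C|P|^5},
$$
which is close to but not exactly the claimed bound $k^6|S_{\geq k}|^4 / (C|P|^7)$; the stated inequality carries two extra factors of $k/|P|\le 1$, so it is in fact \emph{weaker} than the naive estimate, meaning any richness lower bound of the form $|P\cap hP|\gtrsim k^2/|P|$ (or even the cruder $\gtrsim k^3/|P|^2$) will suffice, and I have some slack. The main obstacle, and the step requiring genuine care, is establishing the richness lower bound for the quotient elements $h=g_1g_2^{-1}$: I must argue that the two size-$k$ overlaps $P\cap g_1 P$ and $P\cap g_2 P$ force a quantitatively large overlap $P\cap hP$, rather than merely a nonempty one. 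This is a counting argument about how the $k$ images under $g_1$ and the $k$ preimages under $g_2$ must collide inside the fixed $|P|$-element set $P$, and getting the exponent of $k$ right here is what determines whether the final bound matches the lemma; the deliberate looseness of the target exponent ($k^6/|P|^7$ rather than $k^4/|P|^5$) suggests the authors prove a weaker richness bound that still comfortably clears the stated threshold.
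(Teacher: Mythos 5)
Your argument has a genuine gap at its key step: the claimed pointwise richness bound for quotients is false. If $g_1,g_2$ are both $k$-rich, write $A_i=g_i^{-1}(P\cap g_iP)\subseteq P$, so $|A_i|\geq k$. Tracing the composition shows only that $|P\cap g_1g_2^{-1}P|\geq |A_1\cap A_2|$, and two $k$-element subsets of $P$ are in general position disjoint whenever $2k\leq |P|$; the pigeonhole bound $|A_1\cap A_2|\geq 2k-|P|$ gives nothing in the regime that matters. The heuristic $k^2/|P|$ is only the \emph{expected} overlap of random $k$-sets, not a guarantee. A concrete counterexample: let $P=A\cup B$ where $A$ and $B$ are arithmetic progressions of length $|P|/2$ on two generic lines, and let $S$ consist of the rich translations along each line. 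Then each element of $S$ is $\Omega(|P|)$-rich, but $SS^{-1}$ contains the full grid of differences $mv_A-m'v_B$, a set of size $\Theta(|S|^2)$ consisting mostly of translations with $|P\cap hP|=O(1)$. So your intermediate claim $|SS^{-1}|\leq C|P|^5/k^4$ fails badly (here you would need $O(|P|)$ but the truth is $\Theta(|P|^2)$), and with it the support-based Cauchy--Schwarz bound collapses to the trivial $E(S)\geq |S|^2$. The deeper structural problem is that bounding only the \emph{support} of $r(h)$ throws away the representation weights, and in examples like this the energy lives on a small set of $h$ with huge $r(h)$, invisible to a support count.

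What is true, and what the paper actually uses, is the \emph{averaged} version of your richness claim: starting from $k|S|\leq \sum_{p\in P}\sum_{\theta\in S}P(\theta p)$ and applying Cauchy--Schwarz, one gets
$$ k^2|S|^2 \leq |P|\sum_{\varphi} r_{SS^{-1}}(\varphi)\,|P\cap \varphi P|, $$
i.e.\ quotients are $\gtrsim k^2/|P|$-rich only on average, weighted by $r_{SS^{-1}}$. The paper then splits this weighted sum by H\"older with exponents $(3/2,3)$, so that the cube of the richness aggregates into the triangle energy $\sum_\varphi |P\cap\varphi P|^3 \approx E_T(P)\leq C|P|^4$ --- this is where Guth--Katz \eqref{Eq: GKExplicitConstant} enters, rather than through any richness threshold for individual quotients --- and finally applies Cauchy--Schwarz in the form $\sum_\varphi r^{3/2}\leq \bigl(\sum_\varphi r\bigr)^{1/2}\bigl(\sum_\varphi r^2\bigr)^{1/2} = |S|\,E(S)^{1/2}$ to bring in the group energy. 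Chaining these gives $k^6|S|^6\leq C|P|^7|S|^2E(S)$, which is exactly the stated bound; the exponents $k^6/|P|^7$ are the natural output of this moment argument, not a deliberately weakened form of a stronger support estimate. To repair your write-up you would need to replace the pointwise claim by this weighted second/third-moment scheme; as proposed, the argument does not go through.
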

We than combine Lemma \ref{Prop: LargeGroupEnergy} with one of the good $k$ guaranteed by Lemma \ref{Lem: ManyVeryRichActions}. This gives us the following corollary.

\begin{corollary}\label{Coro: SetWithLargeEnergy}
If $P$ is a set with at most $M|P|^2$ classes of congruent triangles, then there is a set $S\subset \SE$ such that all of the following hold
\begin{itemize}
    \item $\ds |S| \geq \frac{|P|}{3M}.$
    \item Each element of $S$ is at  least $\ds \left(\frac{|P|}{3CM}\right)$-rich when acting on $P$.
    \item $ \ds \frac{|S|^3}{(3CM)^7} \leq E(S). $
\end{itemize}
\end{corollary}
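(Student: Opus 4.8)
Let me sketch how I would derive this corollary from the two preceding results, namely Lemma \ref{Lem: ManyVeryRichActions} and Lemma \ref{Prop: LargeGroupEnergy}.

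The plan is to extract a single good value of $k$ from Lemma \ref{Lem: ManyVeryRichActions} and feed it into the energy inequality of Lemma \ref{Prop: LargeGroupEnergy}. First I would invoke Lemma \ref{Lem: ManyVeryRichActions}, which (since $(3CM)^{-3}|P| \geq 1$ for $|P|$ large) guarantees at least one value of $k$ satisfying both $|P| \geq k \geq |P|/3CM$ and $|S_{\geq k}(P)| \geq |P|/3M$. Fix such a $k$ and set $S := S_{\geq k}(P)$. The first two bullet points of the corollary are then immediate: the richness bound holds because every element of $S_{\geq k}$ is at least $k$-rich and $k \geq |P|/3CM$, and the size bound $|S| \geq |P|/3M$ is exactly the second condition on a good $k$.

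The third bullet is where the real content lies. I would apply Lemma \ref{Prop: LargeGroupEnergy} to this $S = S_{\geq k}$, giving
\begin{equation*}
    E(S) \geq \frac{k^6 |S|^4}{C|P|^7}.
\end{equation*}
The goal is to replace the right-hand side by $|S|^3/(3CM)^7$. To do this I would bound $k$ from below using $k \geq |P|/3CM$, so that $k^6 \geq |P|^6/(3CM)^6$, yielding
\begin{equation*}
    E(S) \geq \frac{|P|^6 |S|^4}{(3CM)^6 \, C \, |P|^7} = \frac{|S|^4}{(3CM)^6\, C\, |P|}.
\end{equation*}
Finally I would trade one factor of $|S|$ for a factor of $|P|$ using the size lower bound $|S| \geq |P|/3M$, equivalently $|P| \leq 3M|S|$, which gives $|S|^4/|P| \geq |S|^3/3M$. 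Substituting this in produces $E(S) \geq |S|^3/((3CM)^6 \cdot C \cdot 3M)$, and the denominator $(3CM)^6 \cdot 3MC$ is at most $(3CM)^7$ (since $3MC \leq 3CM$), which yields the claimed bound $E(S) \geq |S|^3/(3CM)^7$.

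The computation is entirely routine bookkeeping of constants, so there is no genuine obstacle here; the only point requiring minor care is the direction of each inequality when substituting the bounds on $k$ and on $|P|$ versus $|S|$, and checking that the accumulated constant factors out to exactly $(3CM)^7$ rather than something slightly larger. The essential mathematical work has already been done in the two lemmas being combined—Lemma \ref{Lem: ManyVeryRichActions} supplies a rich value of $k$ with no logarithmic loss, and Lemma \ref{Prop: LargeGroupEnergy} converts richness into group energy—so this corollary is purely a clean packaging step.
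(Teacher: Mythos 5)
Your proposal is correct and is essentially identical to the paper's own proof: fix one good $k$ supplied by Lemma~\ref{Lem: ManyVeryRichActions}, set $S = S_{\geq k}$ (which immediately gives the first two bullets), apply Lemma~\ref{Prop: LargeGroupEnergy}, and substitute $k \geq |P|/3CM$ six times and $|S| \geq |P|/3M$ once. Your closing remark that $3MC \leq 3CM$ is in fact an equality since $C$ and $M$ are commuting positive constants, so the accumulated denominator is exactly $(3CM)^7$, matching the stated bound.
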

We leave the more technical proof of Lemma \ref{Prop: LargeGroupEnergy} until after the short proof of Corollary \ref{Coro: SetWithLargeEnergy}. 
\begin{proof}[Proof of Corollary \ref{Coro: SetWithLargeEnergy}]
Lemma \ref{Lem: ManyVeryRichActions} gives us $|P|/(3CM)^3$ values of $k$ such that $|S_{\geq k}| \geq \frac{|P|}{3M}$ where $k \geq |P|/3CM$. We take one such $k$ and let $S=S_{\geq k}$. By Lemma \ref{Prop: LargeGroupEnergy} we have
$$ \frac{k^6|S|^4}{C|P|^7} \leq E(S). $$
Using the estimate on $k$ six times and the estimate on $|S|$ once we see that $\frac{|S|^3}{(3CM)^7} \leq E(S)$.
\end{proof}
We finish the section by proving Lemma \ref{Prop: LargeGroupEnergy}.
\begin{proof}[Proof of Lemma \ref{Prop: LargeGroupEnergy}]
We now use the methodology introduced and developed by Elekes in \cite{elekes1997linear,elekes1997number,elekes1998linear,elekes2001sums}. Let $P(x)$ be used as the indicator function of $x \in P$ and let $S=S_{\geq k}$. Using that each element $\theta$ of $S$ is at least $k$-rich we have
\begin{align*}
    k|S|   \leq \sum_{p \in P}\sum_{\theta\in S}P(\theta p).
\end{align*}
Using Cauchy-Schwarz gives
$$ k|S|\leq |P|^{1/2} \left(\sum_{p \in P} \sum_{\theta, \phi} P(\theta p)P(\phi p) \right)^{1/2}.$$
As we have $p\xrightarrow{\theta}\theta p$ and $p\xrightarrow{\phi}\phi p$, we can see that $\theta p \xrightarrow{\phi\theta^{-1}} \phi p$.
Thus, we can relabel the sum over $\theta, \phi$ as the sum over $\varphi=\phi\theta^{-1}$ in $SS^{-1}$. When we do this we have to count the repeated representations of $\varphi$ using the weight
$$r_{SS^{-1}}(\varphi) = |\{ (\theta, \phi) \in S \times S: \phi\theta^{-1} = \varphi\}|.$$
So letting $p' = \theta p$ this allows us to write
$$\sum_{p \in P} \sum_{\theta, \phi} P(\theta p)P(\phi p) = \sum_{p'\in P} \sum_{\varphi} P(\varphi p')r_{SS^{-1}}(\varphi). $$
Squaring both sides, we see that
\begin{align*}
    k^2|S|^2 &\leq |P| \sum_{p'\in P} \sum_{\varphi} P(\varphi p')r_{SS^{-1}}(\varphi),\\
    k^2|S|^2 &\leq |P| \sum_{\varphi} r_{SS^{-1}}(\varphi) \sum_{p'\in P} P(\varphi p').
\end{align*}
We apply H\"older to obtain
\begin{align*}
    k^2|S|^2 &\leq |P| \left(\sum_{\varphi} r_{SS^{-1}}^{3/2}(\varphi)\right)^{2/3}\left( \sum_{\varphi} \sum_{p,p',p''\in P}P(\varphi p)P(\varphi p')P(\varphi p'') \right)^{1/3}.
\end{align*}
The second sum is exactly the triangle energy discussed in Section \ref{Sec: FewTrianglesMAnyRichActions}. Indeed, we are counting the size of the set
$$ E_T(P) =  \{ (p,p',p'', \varphi p, \varphi p', \varphi p'') \in P^6 : \varphi \in \SE \},$$
which, as in \eqref{Eq: RudnevTriangle}, is bounded by $C|P|^4$. Cubing both sides, we have 
\begin{align*}
        k^6|S|^6 &\leq |P|^3 \left(\sum_{\varphi} r_{SS^{-1}}^{1/2}(\varphi)r_{SS^{-1}}(\varphi)\right)^{2}C|P|^4.
\end{align*}
We then apply Cauchy-Schwarz,
\begin{align*}
    k^6|S|^6 &\leq |P|^3 \left(\sum_{\varphi} r_{SS^{-1}}(\varphi)\right)\left(\sum_{\varphi}r_{SS^{-1}}^{2}(\varphi)\right)C|P|^4.
\end{align*}
We note that the first sum is just $|S|^2$ and the second is the group energy $E(S)$. Both of these calculations are the same as the ones in Section \ref{Sec: FewTrianglesMAnyRichActions}, after \eqref{Eq: CauchySchwarzStep}. Rearranging, we have the desired bound
$$ \frac{k^6|S|^4}{C|P|^7} \leq E(S). $$
\end{proof}
\section{Rich Symmetries With Large Group Energy Give $P$ Large Energy}\label{Sec: LargeGroupEnergyGivesLargePointEnergy}
We demonstrate the structure that a set $P$ with few classes of congruent triangles has in the affine group. See the forthcoming Corollary \ref{Coro: LargeCosetIntersection}. We also show that point sets with few classes of congruent triangles demonstrate sharp energy bounds. See Corollary \ref{Coro: FewTrianglesLargeEnergy}.

We will make extensive use of the affine group $\Aff(\C)$. We also make the usual identification of \(\mathbb{R}^2\) with \(\mathbb{C}\). We use that \(\text{Aff}(\mathbb{C})=\mathbb{C}^\times \ltimes \mathbb{C}\) with identity $(1,0)$ and semidirect product multiplication
\begin{equation}\label{eq: AffGroupProduct}
    (g_1,g_2)(h_1,h_2) = (g_1h_1, g_1h_2 + g_2).
\end{equation}
The group of rigid motions $\text{SE}_2(\mathbb{R})$ becomes a subgroup of \(\text{Aff}(\mathbb{C})=\mathbb{C}^\times \ltimes \mathbb{C}\) via embedding it as the subgroup $S^1 \ltimes \C$. Let $x=(x_1,x_2)$ and $t=(t_1,t_2)$ be points in $\R^2$, the embedding is as follows
\begin{align}\label{eq: Embedding1}
    \text{Rotation by angle } \theta \text{ about the point } x &\to (e^{i\theta}, (x_1+ix_2)(1-e^{i\theta}))\\\
    \text{Translation by } t &\to (1, t_1 + it_2)\label{eq: Embedding2}
\end{align}
One can check this is a group isomorphism.
This helps by letting us exploit the geometry of the affine group, similar approaches can be found be found in \cite{RS18, PRNRW22}. The affine group acts on $\C$, in particular on $P \subseteq \C$, via the action 
\begin{equation}\label{eq: GroupAction}
    (g_1,g_2)\cdot x = g_1 x + g_2.
\end{equation}
We will care about two types of subgroups of $\Aff(\C)$. In particular those that can be thought of as lines when $\text{Aff}(\C)$ is identified as the `plane' $\C^2\setminus \{(0,z): z \in \C\}$. 
\begin{itemize}
    \item The unipotent subgroup $U_0=\{(1,z) : z \in \C\}$ is a vertical line through the identity. A coset $gU_0$ is the vertical line though $g$.
    \item The maximal tori $T(z)$, these are the stabiliser subgroups under the action in \eqref{eq: GroupAction}. So, for $z\in \C$, we define $T(z)=\Stab(z)$. Tori correspond to non-vertical lines through the identity $(1,0)$. A coset $gT(z)$ is a non-vertical line passing through $g$.
\end{itemize}
 We use the following Theorem of Petridis-Roche--Newton-Rudnev-Warren \cite{PRNRW22}, the proof of which is an application of Rudnev's point-plane bound \cite{rudnev2018pointPlane}. The version stated below is adapted to our setting, for a positive characteristic version see \cite{PRNRW22}.
 Similarly to the group energy we define, for $S$ any subset of a group, the energy $E^*(S) = |\{ (g_1,g_2,g_3,g_4) \in S^4: g_1g_2=g_3g_4\}|$.

\begin{theorem}\label{Thm: PoorLinesImpliesSmallEnergy}
Let $S$ be a finite set of transformations in the affine group $\text{Aff}(\mathbb{C})$ such that no non-vertical line
contains more than $H$ points of
$S$, and no vertical line contains more than $V$ points of $S$. Then,

$$\max\{E(S), E^*(S)\} = O\left( V^{1/2}|S|^{5/2} + H|S|^2 \right).$$
\end{theorem}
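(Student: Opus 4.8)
The plan is to prove Theorem \ref{Thm: PoorLinesImpliesSmallEnergy} by reducing both energies to incidence-counting problems that can be fed into Rudnev's point-plane bound \cite{rudnev2018pointPlane}. First I would set up the counting interpretation of the energies. Writing elements of $\text{Aff}(\mathbb{C})$ as pairs $(a,b)\in \mathbb{C}^\times\times \mathbb{C}$ with the product \eqref{eq: AffGroupProduct}, the equation $g_1g_2^{-1}=g_3g_4^{-1}$ defining $E(S)$ (and similarly $g_1g_2=g_3g_4$ for $E^*(S)$) becomes a system of two polynomial equations in the coordinates of the four group elements. The key observation is that each such equation is \emph{trilinear} in a suitable arrangement of the coordinates: after separating the first (multiplicative) coordinate from the second (additive) coordinate, one of the two defining equations can be rearranged into the form $A_i B_j = C_k$ or, better, into a point-plane incidence $\langle \text{point}, \text{plane}\rangle = 0$ where the ``points'' are built from two of the group elements and the ``planes'' from the other two.

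The main work is to arrange the algebra so that Rudnev's point-plane bound applies cleanly. Rudnev's bound states that for a set of points and a set of planes in $\mathbb{C}^3$ (or $\mathbb{F}^3$), with neither too concentrated on a single line, the number of incidences is $O(|\text{points}|\,|\text{planes}|^{1/2} + |\text{planes}|\cdot k)$, where $k$ controls the worst-case collinear concentration. So I would define points $\rho$ indexed by pairs $(g_1,g_3)\in S\times S$ and planes $\pi$ indexed by pairs $(g_2,g_4)\in S\times S$, so that $g_1g_2^{-1}=g_3g_4^{-1}$ corresponds exactly to an incidence $\rho\in\pi$. Then $E(S)$ equals the number of incidences between these $|S|^2$ points and $|S|^2$ planes. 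The quantity $V^{1/2}|S|^{5/2}$ in the statement is precisely the main term $|\text{points}|\cdot|\text{planes}|^{1/2}=|S|^2\cdot(|S|^2)^{1/2}$ once the collinearity input is $V^{1/2}$, and the $H|S|^2$ term is the degenerate contribution from planes/points concentrated on lines.

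The hard part, which is where the hypotheses on $H$ and $V$ enter, is controlling the degenerate term in Rudnev's bound: I must translate the geometric hypotheses ``no non-vertical line carries more than $H$ points of $S$, no vertical line carries more than $V$ points'' into a bound on how many of the constructed points can lie on a single line in $\mathbb{C}^3$. This requires understanding which lines in the point-set of the incidence problem correspond to vertical versus non-vertical lines (cosets of $U_0$ versus cosets of tori $T(z)$) in $\text{Aff}(\mathbb{C})$, using the dictionary established just before the theorem. The vertical-line bound $V$ should govern the $V^{1/2}$ appearing under the square root (the ``worst plane'' contribution feeding the main term), while the non-vertical bound $H$ should govern the additive degenerate term $H|S|^2$. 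I would verify this correspondence by checking, for a fixed $g_3$ (or fixed value of the multiplicative coordinate), how the incidence condition forces the remaining free element to range over a coset, and then bounding the size of that coset intersection with $S$ by either $H$ or $V$ according to whether the coset is a torus or unipotent coset.

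Finally, the bound for $E^*(S)$ follows by the identical argument after replacing $g_2^{-1},g_4^{-1}$ by $g_2,g_4$, since inversion in $\text{Aff}(\mathbb{C})$ is a polynomial coordinate change that preserves the line-concentration hypotheses (the image of $S$ under $g\mapsto g^{-1}$ has the same $H,V$ parameters up to relabeling vertical/non-vertical cosets). I expect the principal obstacle to be the bookkeeping that shows the collinear concentration in the auxiliary point-plane configuration is controlled by exactly $V$ (not $H$) in the term that survives the square root, since a mismatch here would weaken the exponent; getting this right is what makes the clean form $V^{1/2}|S|^{5/2}+H|S|^2$ come out rather than a lossier hybrid.
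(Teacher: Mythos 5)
First, a framing point: the paper does not prove Theorem \ref{Thm: PoorLinesImpliesSmallEnergy} at all --- it imports it from Petridis--Roche-Newton--Rudnev--Warren \cite{PRNRW22}, noting only that the proof there is an application of Rudnev's point--plane bound \cite{rudnev2018pointPlane}. So your choice of toolkit matches the cited source. However, your sketch of how to deploy that toolkit has a genuine gap, in two linked places. (a) The central reduction is not available as you state it: the equation $g_1g_2^{-1}=g_3g_4^{-1}$ in $\Aff(\C)\cong\C^\times\ltimes\C$ unpacks to \emph{two} independent scalar equations, $a_1a_4=a_2a_3$ and $b_1-(a_1/a_2)b_2=b_3-(a_3/a_4)b_4$, whereas a point--plane incidence in three-space encodes a \emph{single} linear equation. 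So with points indexed by one pair and planes by the other, incidence cannot ``correspond exactly'' to the group equation; encoding only one of the two equations does give an upper bound, but then you are counting incidences between $|S|^2$ points and $|S|^2$ planes. (b) The numerology then fails: Rudnev's bound has the shape $I(P,\Pi)=O(|P|^{1/2}|\Pi|+k|\Pi|)$ for $|P|\leq|\Pi|$, so with $|P|=|\Pi|=|S|^2$ the main term is $|S|^3$, which is the \emph{trivial} bound for $E(S)$ (three of the four elements determine the fourth). Your claim that the main term equals $V^{1/2}|S|^{5/2}$ ``once the collinearity input is $V^{1/2}$'' is arithmetically false ($|S|^2\cdot(|S|^2)^{1/2}=|S|^3$) and structurally false: in Rudnev's theorem the collinearity parameter $k$ multiplies only the secondary term $k|\Pi|$; it never discounts the main term. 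As written, your engine can only return $E(S)\leq|S|^3$, which has no content.

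The missing idea is to fiber along the multiplicative coordinate \emph{before} any incidence count. Writing $\lambda=a_1/a_2=a_3/a_4$ and $n(\lambda)=|\{(g,h)\in S^2: a_g=\lambda a_h\}|$, the vertical-line hypothesis gives $n(\lambda)\leq V|S|$ for every $\lambda$ (each $g$ admits at most $V$ partners on the relevant vertical line), while $\sum_\lambda n(\lambda)=|S|^2$. For each fixed $\lambda$ the surviving condition is one bilinear equation of the form $a_h(b_g-b_{g'})=a_g(b_h-b_{h'})$, which genuinely is a point--plane incidence among roughly $n(\lambda)$ points and $n(\lambda)$ planes in $\C^3$, with collinear families in the auxiliary space controlled by the non-vertical parameter $H$ --- this is where $H$ enters, not $V$. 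Summing Rudnev's bound over fibers and using the power-sum estimate $\sum_\lambda n(\lambda)^{3/2}\leq(\max_\lambda n(\lambda))^{1/2}\sum_\lambda n(\lambda)\leq(V|S|)^{1/2}|S|^2=V^{1/2}|S|^{5/2}$, together with $\sum_\lambda Hn(\lambda)=H|S|^2$, is what produces the stated bound; without the fibering and this H\"older-type step the $V^{1/2}$ cannot appear. One further small repair: your inversion trick for $E^*(S)$ produces a mixed energy between $S$ and $S^{-1}$, which in a nonabelian group is not $E(S^{-1})$, let alone $E(S)$; the cleanest fix is the one the paper itself uses, namely Shkredov's inequality $E^*(S)\leq E(S)$ \cite{shkredov2021modular}, after which only $E(S)$ needs bounding.
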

\noindent Shkredov \cite{shkredov2021modular} shows that $E^*(S)\leq E(S)$, so we can use $E(S)$ as this maximum.

Using Corollary \ref{Coro: SetWithLargeEnergy} with Theorem \ref{Thm: PoorLinesImpliesSmallEnergy} we find a rich line in $\Aff(\C)$. Indeed, as a direct consequence of the group energy bound for the set $S$ in Corollary \ref{Coro: SetWithLargeEnergy} we can say that there are positive constants $c_1$ and $c_2$ such that one of the following must exist:
\begin{itemize}
    \item A vertical line in $\Aff(\C)$ that contains at least $\ds c_1\left(\frac{|S|}{(3CM)^{14}}\right)$ points of $S$;
    \item A non-vertical line in $\Aff(\C)$ that contains at least $\ds c_2\left(\frac{|S|}{(3CM)^7}\right)$ points of $S$.
\end{itemize}
Corollary \ref{Coro: SetWithLargeEnergy} tells us that $|S|\geq|P|/3M$, this gives us the following corollary.
% We now state the main result of the section, showing that if $P$ exhibits lots of very-rich actions we gain a lot of structure in the group. This then gives $P\subseteq\C$ a large additive or multiplicative energy.
% \begin{proposition}\label{Prop: LargeCosetIntersection}
% Suppose that $P$ is a point set in $\R^2$, let $C_1$ and $C_2$ be positive constants. Suppose there is a set $S$ in $\SE$ of $C_1|P|$ actions, each of which are $(C_2|P|)$-rich with respect to the points in $P$. Then at least one of the following holds:
% \begin{itemize}
%     \item There is a $g$ in $\Aff(\C)$ such that  $\ds |gU_0 \cap S| \geq c_1\frac{C_1C_2^6}{C}|P|$.
%     \item There is a $g$ in $\Aff(\C)$ and $z\in \C$ in $\R^2$ such that $\ds |gT(z) \cap S| \geq \frac{c_2C_1C_2^6}{C}|P|$.
% \end{itemize}
% \end{proposition}
\begin{corollary}\label{Coro: LargeCosetIntersection}
Let $P$ be a point set in $\R^2$ with at most $M|P|^2$ classes of congruent triangles. Let $C$ be the constant in the Guth-Katz theorem. Corollary \ref{Coro: SetWithLargeEnergy} guarantees a set $S$ which contains at least $|P|/3M$ rigid motions, all of which are at least $(|P|/3CM)$-rich. For this set $S$, there are positive constants $c_1,c_2$ and $z\in \C$ such that at least one of the following holds:
\begin{itemize}
    \item There is some $g$ in $\Aff(\C)$ such that  $\ds |gU_0 \cap S| \geq c_1\frac{|P|}{(3CM)^{14}(3M)}$.
    \item There is some $g$ in $\Aff(\C)$ and $z$ in $\C$ such that $\ds |gT(z) \cap S| \geq c_2\frac{|P|}{(3CM)^7(3M)}$.
\end{itemize}
\end{corollary}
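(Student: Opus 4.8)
The plan is to read the dichotomy straight off the group-energy bound by feeding it into the contrapositive of Theorem \ref{Thm: PoorLinesImpliesSmallEnergy}. Corollary \ref{Coro: SetWithLargeEnergy} hands us a set $S \subseteq \SE$ with $|S| \geq |P|/3M$, every element $(|P|/3CM)$-rich, and crucially $E(S) \geq |S|^3/(3CM)^7$. Theorem \ref{Thm: PoorLinesImpliesSmallEnergy} says that if no non-vertical line meets $S$ in more than $H$ points and no vertical line in more than $V$ points, then $E(S) = O(V^{1/2}|S|^{5/2} + H|S|^2)$. So a large value of $E(S)$ cannot coexist with both $V$ and $H$ small; forcing one of them to be polynomially large is exactly the conclusion we want, and the membership and richness claims about $S$ are simply inherited verbatim from Corollary \ref{Coro: SetWithLargeEnergy}.

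Concretely, I would write the $O(\cdot)$ as $E(S) \leq A\left(V^{1/2}|S|^{5/2} + H|S|^2\right)$ for some absolute constant $A$, and combine it with the lower bound to obtain
\[
\frac{|S|^3}{(3CM)^7} \leq A\left(V^{1/2}|S|^{5/2} + H|S|^2\right).
\]
At least one of the two summands on the right is then at least half the left-hand side. If the first dominates, dividing by $|S|^{5/2}$ and squaring gives $V \geq c_1 |S|/(3CM)^{14}$ (the squaring is precisely what turns the exponent $7$ into $14$). If the second dominates, dividing by $|S|^2$ gives $H \geq c_2 |S|/(3CM)^7$. Here $c_1$ and $c_2$ depend only on $A$, hence are absolute.

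It then remains to translate these line-richness statements into coset intersections. The subgroup discussion preceding the corollary identifies the vertical lines in $\Aff(\C)$ exactly with the cosets $gU_0$, and the non-vertical lines exactly with the cosets $gT(z)$ for $z \in \C$. So a vertical line carrying at least $c_1 |S|/(3CM)^{14}$ points of $S$ is precisely a coset $gU_0$ with $|gU_0 \cap S| \geq c_1|S|/(3CM)^{14}$, and a rich non-vertical line is a coset $gT(z)$ with $|gT(z) \cap S| \geq c_2|S|/(3CM)^7$ for the corresponding $z$. Substituting the lower bound $|S| \geq |P|/3M$ supplied by Corollary \ref{Coro: SetWithLargeEnergy} then yields the stated quantities $c_1 |P|/((3CM)^{14}(3M))$ and $c_2|P|/((3CM)^7(3M))$.

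I do not expect a genuine obstacle here: all the analytic content already sits inside the two cited results, and what remains is the elementary case split plus a clean dictionary between lines and cosets. The only points demanding any care are the bookkeeping of exponents through the squaring step (so that the factor $(3CM)^7$ correctly becomes $(3CM)^{14}$ in the vertical case), and verifying that the line/coset correspondence is an exact equality rather than a mere containment, so that a line meeting $S$ in many points genuinely produces a single coset meeting $S$ in many points.
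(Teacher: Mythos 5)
Your proposal is correct and follows essentially the same route as the paper: the paper likewise derives Corollary \ref{Coro: LargeCosetIntersection} by playing the energy lower bound $E(S)\geq |S|^3/(3CM)^7$ from Corollary \ref{Coro: SetWithLargeEnergy} off against Theorem \ref{Thm: PoorLinesImpliesSmallEnergy}, splitting according to which of the two terms $V^{1/2}|S|^{5/2}$, $H|S|^2$ dominates (squaring in the vertical case is exactly where $(3CM)^{14}$ arises), identifying vertical and non-vertical lines with the cosets $gU_0$ and $gT(z)$ as set up before the corollary, and finally substituting $|S|\geq |P|/3M$. Your extra care about the dyadic case split and the exactness of the line--coset dictionary matches what the paper leaves implicit.
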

Unfortunately, there are point sets $P$ where both the conclusions of Corollary \ref{Coro: LargeCosetIntersection} can be achieved simultaneously\footnote{For example: Let $P_1$ be an arithmetic progression on a line and $P_2$ be the set of vertices of a regular polygon. Suppose that $|P_1|=|P_2|$. Then $P=P_1\cup P_2$ we would have lines/cosets of both types.}.
In Section \ref{Sec: MultiplicativeStructure} we show that we can take \(g\) and our cosets in the subgroup $\SE = S^1\ltimes \C$. This will be important for establishing the circle structure.

We can also prove the following result about the energy of the point set $P$. Recall that $E_\times(P)$ is the multiplicative energy, treating $P$ as a set of complex numbers. 
\begin{proposition}\label{Prop: LargePEnergy}
Suppose that $P$ is a point set in $\R^2$, let $C_1, C_2$ and $C_3$ be positive constants. Suppose there is a set $S$ in $\SE$ of rigid motions each of which are $(C_1|P|)$-rich when acting on $P$. Both of the following hold.
\begin{itemize}
     \item If $\ds |gU_0 \cap S| \geq C_2|P|$ then $E_+(P)\geq C_2C_1^3|P|^3$,
     \item If $\ds |gT(z) \cap S| \geq C_3|P| $ then there is some $t\in \C$ such that $ E_\times(P-t)\geq C_3|P|(C_1|P|-1)^2$.
\end{itemize}
\end{proposition}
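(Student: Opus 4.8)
The plan is to extract, from a rich line in the affine group, a large family of rigid motions with a shared "linear part," and then read off the point-energy by counting how each motion sends points of $P$ into $P$. The two bullets correspond exactly to the two geometric types of line described before the statement: a vertical line $gU_0$ consists of motions sharing the same rotation/scaling component $g_1$ but differing by translation, whereas a non-vertical line $gT(z)$ consists of motions all fixing a common point $z$, i.e.\ pure rotations about $z$. These two structures feed into the additive and multiplicative energy of $P$ respectively.

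\medskip

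\emph{Vertical (additive) case.} Write each $\theta = (\theta_1,\theta_2)$ in $gU_0 \cap S$ using the coordinates of \eqref{eq: AffGroupProduct}. Membership in the single coset $gU_0 = \{(g_1, g_1 z + g_2) : z \in \C\}$ forces all these motions to share the same linear part $\theta_1 = g_1$, while $\theta_2$ ranges over a set $T$ of at least $C_2|P|$ distinct translations. Each such $\theta$ is $(C_1|P|)$-rich, so there are at least $C_1|P|$ points $p\in P$ with $\theta p = g_1 p + \theta_2 \in P$. The plan is to count quadruples: for a fixed $\theta_2 \in T$ and two rich points $p,q$ we have $g_1 p + \theta_2,\, g_1 q + \theta_2 \in P$, and subtracting eliminates the common translation, giving $g_1(p-q) = (\text{difference of two points of } P)$. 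Summing over the $\geq C_2|P|$ translations and, for each, the $\geq C_1|P|$ rich points, then applying Cauchy--Schwarz (exactly as in \eqref{Eq: CauchySchwarzStep}) to convert a count of solutions into an energy lower bound, should yield $E_+(P) \geq C_2 C_1^3 |P|^3$. The cube on $C_1$ arises because three of the four free choices in an additive quadruple come from richness while one comes from the size of the coset.

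\medskip

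\emph{Non-vertical (multiplicative) case.} Here all motions in $gT(z)\cap S$ lie in a coset of the stabiliser $T(z) = \Stab(z)$, so each fixes the common point $z \in \C$; set $t = z$. Recentering by writing $P - t$, a rotation fixing $t$ becomes multiplication by a unit complex number $\theta_1$ on the translated coordinates: if $\theta p \in P$ then $\theta_1(p - t) = (\theta p) - t \in P - t$. Thus each of the $\geq C_3|P|$ motions in the coset, being $(C_1|P|)$-rich, produces at least $C_1|P| - 1$ (discarding the possibility $p = t$) pairs $(p-t,\, \theta_1(p-t))$ of elements of $P-t$ related by a fixed multiplier $\theta_1$. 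Counting multiplicative quadruples $x_1 x_2 = x_3 x_4$ in $P-t$ by taking, for each coset element, two of its scaled pairs and eliminating $\theta_1$ via $\theta_1 x = x'$, $\theta_1 y = y' \Rightarrow x' y = x y'$, gives at least $C_3|P|(C_1|P|-1)^2$ solutions, hence the stated bound on $E_\times(P-t)$.

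\medskip

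\emph{Main obstacle.} The conceptual content of each case is the translation between the group coordinates of \eqref{eq: AffGroupProduct}--\eqref{eq: GroupAction} and the arithmetic of $P$; once that dictionary is fixed, the counting is routine. The step I expect to require the most care is ensuring the energy lower bounds are \emph{sharp}, i.e.\ that the Cauchy--Schwarz / direct-counting argument loses no logarithmic or polynomial factor and produces exactly the advertised constants $C_2 C_1^3$ and $C_3(C_1 - o(1))^2$ in the exponent-$3$ bounds. In particular, one must verify that distinct coset elements contribute genuinely distinct translations (resp.\ multipliers) so that no overcounting collapses the $|P|^3$ growth, and in the multiplicative case one must handle the degenerate point $p = t$ cleanly, which is the source of the $(C_1|P| - 1)^2$ rather than $(C_1|P|)^2$.
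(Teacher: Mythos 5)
Your additive case is correct and is essentially the paper's argument. Counting one coset element of $gU_0\cap S$ (at least $C_2|P|$ of them, with pairwise distinct translation parts since $z\mapsto g_1z+g_2$ is injective) against an ordered pair of rich points gives at least $C_2C_1^2|P|^3$ solutions of $q-g_1p=q'-g_1p'$, i.e.\ a lower bound on the mixed energy $E_+(P,g_1P)$; the paper obtains exactly this via $r_{P-g_1P}(g_1z+g_2)\geq C_1|P|$. Two small corrections: the Cauchy--Schwarz you need is not the one in \eqref{Eq: CauchySchwarzStep} but the passage from mixed to plain energy, $E_+(P,g_1P)=\sum_t r_{P-P}(t)\,r_{P-P}(t/g_1)\leq E_+(P)$, and your heuristic for the cube on $C_1$ (``three of the four free choices come from richness'') is wrong --- the count sketched above gives $C_2C_1^2|P|^3$, which implies the stated $C_2C_1^3|P|^3$ only because $C_1\leq 1$; the paper's proof has the same feature.

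The multiplicative case contains a genuine error: you assert that every motion in $gT(z)\cap S$ fixes $z$, and set $t=z$. This conflates the coset with the stabiliser. An element $gh$ with $h\in T(z)=\Stab(z)$ sends $z$ to $g\cdot z$, and fixes $z$ only in the special case $g\in T(z)$. Consequently your recentred identity $\theta_1(p-t)=\theta p-t$ with $t=z$ is false whenever $g\cdot z\neq z$, the pairs you produce do not lie in $(P-t)\times(P-t)$, and the count of quadruples $x_1x_2=x_3x_4$ inside $P-t$ does not go through. The correct relation, the paper's Lemma \ref{Lem: TechnicalTorusLemma}, is $(q-g\cdot z)/(p-z)=g_1h_1$: the source is recentred at $z$ but the target at $g\cdot z$. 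Your elimination trick $\theta_1x=x'$, $\theta_1y=y'\Rightarrow x'y=xy'$ survives in this corrected form, but it now bounds a \emph{mixed} multiplicative quantity between $P-z$ and $P-g\cdot z$, and one further Cauchy--Schwarz, as in the paper,
\begin{equation*}
C_3|P|(C_1|P|-1)^2 \leq \Bigl(\sum_{\zeta''} r_{\frac{P-g\cdot z}{P-g\cdot z}}^2(\zeta'')\Bigr)^{1/2}\Bigl(\sum_{\zeta''} r_{\frac{P-z}{P-z}}^2(\zeta'')\Bigr)^{1/2} = E_\times^{1/2}(P-g\cdot z)\,E_\times^{1/2}(P-z),
\end{equation*}
is needed to conclude that \emph{some} $t\in\{z,\,g\cdot z\}$ satisfies $E_\times(P-t)\geq C_3|P|(C_1|P|-1)^2$. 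This is exactly why the proposition is stated with an existential $t$ rather than $t=z$. Your handling of the degenerate point (giving $k-1$ rather than $k$) and your concern about distinct multipliers are both fine --- indeed $h_1$ determines $h=(h_1,z(1-h_1))\in T(z)$, so distinct coset elements give distinct multipliers, though the paper's $n_i$-weighting handles repeats without needing this.
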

Using the set $S$ from Corollary \ref{Coro: SetWithLargeEnergy} with the additional properties from Corollary \ref{Coro: LargeCosetIntersection}, Proposition \ref{Prop: LargePEnergy} allows us to conclude the following.
\begin{corollary}\label{Coro: FewTrianglesLargeEnergy}
Let $P$ be a point set in $\R^2$ with at most $M|P|^2$ classes of congruent triangles. Then at least one of the following is true:
\begin{itemize}
     \item $\ds E_+(P)\geq \frac{|P|^3}{(3CM)^{16}(3M)}$,
     \item There exists some $t\in \C$ such that $\ds E_\times(P-t)= \Omega \left(\frac{|P|^3}{(3CM)^{9}(3M)}\right)$.
\end{itemize}
\end{corollary}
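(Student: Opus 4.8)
The plan is to assemble Corollary~\ref{Coro: FewTrianglesLargeEnergy} directly from the three results immediately preceding it; no new idea is needed beyond tracking constants through the pipeline. Starting from the triangle hypothesis, Corollary~\ref{Coro: SetWithLargeEnergy} manufactures a set $S \subseteq \SE$ with $|S| \geq |P|/3M$, each of whose elements is at least $(|P|/3CM)$-rich with respect to $P$. This fixes the richness parameter for Proposition~\ref{Prop: LargePEnergy}: in that proposition's notation we are in the regime $C_1 = 1/(3CM)$, so that every element of $S$ is $(C_1|P|)$-rich. Applying Corollary~\ref{Coro: LargeCosetIntersection} to this same $S$ then yields a dichotomy, together with constants $c_1, c_2$ and a point $z \in \C$: either a unipotent coset $gU_0$ meets $S$ in at least $c_1|P|/((3CM)^{14}(3M))$ points, or a toral coset $gT(z)$ meets $S$ in at least $c_2|P|/((3CM)^7(3M))$ points.

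I would then feed each alternative into the matching half of Proposition~\ref{Prop: LargePEnergy}. In the unipotent case the coset intersection has the form $C_2|P|$ with $C_2 = c_1/((3CM)^{14}(3M))$, so the first bullet of the proposition gives $E_+(P) \geq C_2 C_1^{3}|P|^3$; substituting $C_1 = 1/(3CM)$ collapses this into an explicit bound of the stated form, where the power of $3CM$ is just the sum of the power $14$ contributed by the coset intersection and the power of $C_1$ appearing in the proposition. In the toral case the intersection is $C_3|P|$ with $C_3 = c_2/((3CM)^7(3M))$, and the second bullet of the proposition produces a $t \in \C$ with $E_\times(P-t) \geq C_3|P|(C_1|P|-1)^2$; since $C_1|P| = |P|/(3CM)$, retaining the dominant term gives $E_\times(P-t) = \Omega(|P|^3/((3CM)^{9}(3M)))$, which is the second alternative. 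Because the two branches of Corollary~\ref{Coro: LargeCosetIntersection} are exhaustive, at least one alternative of the corollary must hold.

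No step here is a genuine obstacle --- the entire substance of the argument already lives in Proposition~\ref{Prop: LargePEnergy} and Corollary~\ref{Coro: LargeCosetIntersection}, both available to us --- so the only care required is in the constant arithmetic. The one point worth flagging is the $(C_1|P|-1)^2$ factor in the multiplicative branch: the subtracted $1$ is negligible once $|P|$ exceeds a threshold depending only on $M$ and $C$ (for instance $(|P|/3CM - 1)^2 \geq \tfrac14(|P|/3CM)^2$ there), and this is precisely why that alternative is phrased with $\Omega$ rather than an explicit constant. Folding the resulting numerical loss, together with $c_1$ and $c_2$, into the implied constants completes the bookkeeping and yields the two stated energy bounds.
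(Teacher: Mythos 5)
Your proposal is correct and follows essentially the same route as the paper's own (one-line) proof, which likewise just chains Corollary~\ref{Coro: SetWithLargeEnergy} and Corollary~\ref{Coro: LargeCosetIntersection} into Proposition~\ref{Prop: LargePEnergy} and tracks the constants. One minor bookkeeping remark: the exponent $(3CM)^{16}$ in the additive branch reflects the bound $E_+(P)\geq C_2C_1^2|P|^3$ actually established in the proof of Proposition~\ref{Prop: LargePEnergy} (the $C_1^3$ in its statement appears to be a typo), so your $14+3$ accounting would yield $(3CM)^{17}$ --- a harmless discrepancy, affecting only the unimportant constant.
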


The remainder of the section is devoted to the proof of Proposition \ref{Prop: LargePEnergy}.
\begin{proof}[Proof of Proposition \ref{Prop: LargePEnergy}]
We prove the additive statement first. We include the multiplicative proof too as there are technical differences that need be checked. For notational ease let $k=C_1|P|$ throughout the proof.

For the additive case we note that the assumptions give us a set $S$ such that
\begin{equation}\label{eq: BkDefinition}
    |gU_0 \cap S| \geq C_2|P| \text{ and } \forall \theta \in S ~ |P\cap \theta P| \geq k.
\end{equation}
By direct application of \eqref{eq: AffGroupProduct} we can see that $gU_0 = \{ (g_1, g_1 z + g_2) : z \in \C \}$. Fix an element $\theta = (g_1, g_1 z + g_2)$ in $gU_0 \cap S$. As $\theta$ is $k$-rich, since it lies in $S$, there are $k$ pairs $(p, q) \in P^2$ such that 
$$q=\theta \cdot  p = (g_1, g_1 z + g_2)\cdot p = g_1p + g_1z + g_2.$$ 
Thus, for each such pair $(p,q)$, we have
\begin{equation}\label{Eq: ActionInU_0}
    q-g_1p = g_1z + g_2.
\end{equation}

Note that $z$ is dependent only on our choice of $\theta$, not on the pair $(p,q)$ selected in $P\times \theta P$. The left-hand side lives in the set $P-g_1P$.  The set $P-g_1P$ depends only on $g$ and so is the same for all choices of $\theta\in gU_0$. This uniformity allows us to show that the $(P-g_1P)$-energy (defined below) is large. We will prove this, then show why this suffices for the claimed additive energy bound.

For a complex number $z$ in $P-g_1P$, we define its number of realisations as
$$r_{(P-g_1P)}(w) = |\{(p,q)\in P^2 : q-g_1p = w\}|.$$
By \eqref{Eq: ActionInU_0}, each of the $k$ pairs $(p,q)$ associated to $\theta$ contribute to $r_{(P-g_1P)}(g_1z+g_2)$, thus
\begin{equation}\label{eq: ManyActionReps}
    r_{(P-g_1P)}(g_1z + g_2)\geq k.
\end{equation}
% Note that $g_1$ is uniform over all choices of $\theta$ in $gU_0\cap S$, as it is determined by the coset $gU_0$. 
We define the energy $E_{+}(P, g_1P)$ as
$$ E_+(P,g_1P) = |\{ (p,q,p',q') \in P^4 : q-g_1p = q'-g_1p'\}|. $$
We can count this energy, similarly to the calculation proceeding Lemma \ref{Lem: ManyVeryRichActions}, as
$$ E_{+}(P, g_1P) = \sum_{w}r_{(P-g_1P)}^2(w).$$
Recall that, by assumption, $|gU_0 \cap S| \geq C_2|P|$. So we have at least $C_2|P|$ choices of $\theta$. We showed in \eqref{eq: ManyActionReps} that each such $\theta$ gives a $w=g_1z+g_2$, where $z$ depends on $\theta$, such that $r_{(P-g_1P)}(w) \geq k$. Thus, recall that $k=C_1|P|$, we have
$$ E_{+}(P, g_1P) = \sum_{w}r_{(P-g_1P)}^2(w) \geq C_2|P|\cdot k^2 = C_2C_1^2|P|^3.$$
To change this into a bound on the additive energy as defined, we use Cauchy-Schwarz. Indeed,
\begin{align*}
    E_+(P, \alpha P) = \sum_{t \in P-P} r_{P-P}(t)r_{P-P}(t/\alpha) \leq \left(\sum_t r_{P-P}^2(t)\right)^{1/2} \left(\sum_t r_{P-P}^2(t/\alpha)\right)^{1/2} = E_+(P).
\end{align*}
So we have, as claimed, that
$$ C_2C_1^2|P|^3 \leq E_+(P, \alpha P) \leq E_+(P).$$

\noindent For the second statement we argue similarly. Recall, we define the torus $T(\zeta)$ as
$$T(\zeta) = \Stab(\zeta)= {\{ (h_1,h_2) \in \Aff(\C) : h_1\zeta+h_2 = \zeta \}}.$$
Our assumption on $S$ gives us that
\begin{equation}\label{eq: B'kDefinition}
    |gT(\zeta) \cap S| \geq C_3|P| \text{ and } \forall \theta \in S ~ |P\cap \theta P| \geq k.
\end{equation}
% As $gT(\zeta)$ is not a subset of $U_0$ we can choose $g$ in $\Aff(\C)\setminus U_0$. 
Let $h=(h_1,h_2)$ be in $T(\zeta)$ such that $\theta =  gh$ is an element in $gT(\zeta) \cap S$. As $gh$ is in $S$, it is $k$-rich. So, there are $k$ pairs $(p,q)$ in $P^2$ such that
$$ gh \cdot p = q.$$
We need the following technical lemma, which we prove directly after this result.
\begin{lemma}\label{Lem: TechnicalTorusLemma}
Let $P$ be a finite point set identified in $\C$, $\zeta$ some complex number. Suppose that $h=(h_1,h_2)\in T(\zeta)$, and $g=(g_1,g_2)\in \Aff(\C)$. 
If $p\neq \zeta$ and $gh \cdot p = q$ then
\begin{equation}\label{eq: DivisorReps}
    \frac{q-g\cdot \zeta}{p-\zeta} =g_1h_1.
\end{equation}
\end{lemma}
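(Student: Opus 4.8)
\textbf{Proof plan for Lemma \ref{Lem: TechnicalTorusLemma}.}
The plan is to exploit the defining property of the torus $T(\zeta)$, namely that $h$ fixes $\zeta$, and then compute directly using the group action \eqref{eq: GroupAction} and the semidirect product structure \eqref{eq: AffGroupProduct}. First I would record the hypothesis that $h=(h_1,h_2)\in T(\zeta)=\Stab(\zeta)$ means precisely $h\cdot\zeta = h_1\zeta + h_2 = \zeta$, so that $h_2 = \zeta - h_1\zeta = (1-h_1)\zeta$. The key observation is that the map $gh$ fixes neither $p$ nor $\zeta$ in general, but it moves $\zeta$ to the computable point $g\cdot\zeta$, since $gh\cdot\zeta = g\cdot(h\cdot\zeta) = g\cdot\zeta$.

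Next I would compute $gh\cdot p$ explicitly. Writing $h\cdot p = h_1 p + h_2 = h_1 p + (1-h_1)\zeta$ using the value of $h_2$ above, and then applying $g=(g_1,g_2)$, I get
\begin{equation*}
    q = gh\cdot p = g_1\bigl(h_1 p + (1-h_1)\zeta\bigr) + g_2 = g_1 h_1 p + g_1(1-h_1)\zeta + g_2.
\end{equation*}
Separately, $g\cdot\zeta = g_1\zeta + g_2$. Subtracting these two expressions gives
\begin{equation*}
    q - g\cdot\zeta = g_1 h_1 p + g_1\zeta - g_1 h_1 \zeta - g_1\zeta = g_1 h_1 (p-\zeta).
\end{equation*}
Since $p\neq\zeta$ by hypothesis, I may divide by $p-\zeta$ to obtain $\frac{q-g\cdot\zeta}{p-\zeta}=g_1 h_1$, which is exactly \eqref{eq: DivisorReps}.

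There is no real obstacle here; the statement is a direct algebraic consequence of the definitions, and the only point requiring minor care is the bookkeeping in passing $h\cdot p$ through the outer map $g$ and correctly cancelling the $g_1\zeta$ terms. The geometric content worth emphasizing is that conjugation/composition by elements of a torus-coset acts on the punctured plane $\C\setminus\{\zeta\}$ as multiplication by the scalar $g_1h_1$ after recentering at the fixed point $\zeta$; this is what will later let us read off multiplicative structure in $P-t$ for the appropriate translate $t=\zeta$.
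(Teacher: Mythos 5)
Your proof is correct and takes essentially the same approach as the paper: a direct computation from the group law and the stabiliser identity $h_1\zeta+h_2=\zeta$, yielding $q-g\cdot\zeta=g_1h_1(p-\zeta)$ and then dividing using $p\neq\zeta$. The only cosmetic difference is that you substitute $h_2=(1-h_1)\zeta$ and subtract $g\cdot\zeta$ directly, whereas the paper evaluates $gh\cdot(p+\zeta)$ in two ways and equates; the content is identical.
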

Notice that the shifts $q-g\cdot \zeta$ and $p-\zeta$ on the left-hand side are entirely determined by $\zeta$ and so only depend on the coset  $gT(\zeta)$.
For a complex number $\zeta'$, we define the number of representations $r_{\frac{P-g\cdot \zeta}{P-\zeta}}(\zeta')$ as
$$ r_{\frac{P-g\cdot \zeta}{P-\zeta}}(\zeta') = {\Bigg\lvert}\left\{ (p,q) \in P^2 : \frac{q-g\cdot \zeta}{p-\zeta} = \zeta'\right\}{\Bigg\rvert}.$$
The particular $k$-rich motion $gh$ determines only the right-hand side of \eqref{eq: DivisorReps}. We want that each of the $k$ pairs $(p,q)$associated to the $k$-rich motion $gh$ to give us a representation of $g_1h_1$. However, we cannot trivially rule out $\zeta \in P$. This means we have at least $k-1$ such representations (as $k$ is very large, this will not matter).

% We point out that, for each $k$-rich motion $gh$ we are considering, the $g_1h_1$ in the right-hand side of \eqref{eq: DivisorReps} is unique. Indeed, by assumption, all our $gh$ lie in the intersection of $gT(\zeta)$ and $S$. So, as $S\subset \SE$, we have that $gh$ is in $\SE$. Thus, \eqref{eq: Embedding1} shows that in $T(\zeta)\cap \SE$ $g_1h_1$ encodes the angle of rotation, which is not unique, but we dont need this as $(nk)^2 \geq nk^2$.

Thus, each element $gh$ in $gT(\zeta) \cap S$ gives us a $g_1h_1$ such that
\begin{equation}\label{eq: DivisionRepsInS}
    r_{\frac{P-g\cdot \zeta}{P-\zeta}}(g_1h_1) \geq k-1.
\end{equation}
% It is possible that different choices of $gh$ give that same $g_1h_1$. This will not be a problem, we will address this following \eqref{eq: MuiltEnergyCauchySchwarz}.

We have $C_3|P|$ choices of $h$ such that $gh\in gT(\zeta) \cap S$. Each choice gives us a $\zeta'=g_1h_1$ such that \eqref{eq: DivisionRepsInS} holds.
It is possible that different choices of $gh$ give that same $g_1h_1$. Suppose we have $n_i$ repeats of $g_1h_1=\zeta_i$. We can use that $n_i(k-1) \leq (n_i(k-1))^2$, for all $n_1 \geq 1$, and that $\sum_i n_i = C_3|P|$ to obtain
\begin{align}\label{eq: MuiltEnergyCauchySchwarz}
    C_3|P|(k-1)^2 \leq \sum_{\zeta'} r_{\frac{P-g\cdot \zeta}{P-\zeta}}^2(\zeta').
\end{align}
Writing this sum here as the energy
$$ {\Bigg\lvert}\left\{(p,q,p',q') \in (P\setminus\{\zeta\})^4 : \frac{q-g\cdot \zeta}{p-\zeta}=\frac{q'-g\cdot \zeta}{p'-\zeta}\right\}{\Bigg\rvert},$$
then rearranging the division one has that
$$ \sum_{\zeta'} r_{\frac{P-g\cdot \zeta}{P-\zeta}}^2(\zeta') = \sum_{\zeta''} r_{\frac{P-g\cdot \zeta}{P-g\cdot \zeta}}(\zeta'') r_{\frac{P- \zeta}{P- \zeta}}(\zeta'').$$
Applying Cauchy-Schwarz and combining with \eqref{eq: MuiltEnergyCauchySchwarz} gives
$$ C_3|P|(k-1)^2 \leq \left(\sum_{\zeta''} r_{\frac{P-g\cdot \zeta}{P-g\cdot \zeta}}^2(\zeta'')\right)^{1/2} \left(\sum_{\zeta''}r_{\frac{P- \zeta}{P- \zeta}}^2(\zeta'')\right)^{1/2}. $$
Using the same energy redefinition trick as above converts both of these division energies into the usual multiplicative energies. So, we have that
 $$C_3|P|(k-1)^2 \leq E_\times^{1/2}(P-g\cdot \zeta)E^{1/2}_\times(P-\zeta).$$

So for translates of $P$ by either $t=\zeta$ or $t=g\cdot \zeta$ we have that
$$ E_\times(P-t)\geq C_3|P|(k-1)^2 = C_3|P|(C_1|P|-1)^2.$$
\end{proof}
\noindent We now prove Lemma \ref{Lem: TechnicalTorusLemma}, which follows from a calculation.
\begin{proof}[Proof of Lemma \ref{Lem: TechnicalTorusLemma}]
Using \eqref{eq: AffGroupProduct} we see that
$$ gh = (g_1h_1, g_1h_2 + g_2).$$
By assumption $gh \cdot p = q$. Using \eqref{eq: GroupAction} we calculate the action of $gh$ on $p+\zeta$
\begin{align*}
    gh \cdot (p+\zeta) &= g_1h_1(p + \zeta) + g_1h_2 + g_2, \\
    &= g_1h_1p + g_1(h_1\zeta+h_2) + g_2.
\end{align*}
Using that $h\in T(\zeta)$, we have that $h_1\zeta+h_2 = \zeta$. So
\begin{align*}
   gh \cdot (p+\zeta) &= g_1h_1p + g_1\zeta + g_2,\\
    &= g_1h_1p + g\cdot \zeta.
\end{align*}
We reevaluate $gh \cdot (p+\zeta)$ to obtain
\begin{align*}
    gh \cdot (p+\zeta) &= g_1h_1(p + \zeta) + g_1h_2 + g_2, \\
    &= (g_1h_1p + g_1h_2 + g_2) + g_1h_1\zeta.
\end{align*}
Using the definition of the action, we can see that $(g_1h_1p + g_1h_2 + g_2)=gh\cdot p$. So, combining this with our assumption that $q=gh\cdot p$, we have
\begin{align*}
     gh \cdot (p+\zeta) &= gh\cdot p + g_1h_1\zeta \\ 
    &= q + g_1h_1\zeta.
\end{align*}
Setting these two different evaluations equal gives
\begin{align*}
    g_1h_1p+g\cdot \zeta &= q + g_1h_1\zeta\\
    g_1h_1(p-\zeta) &= q- g\cdot \zeta
\end{align*}
Thus, as $p\neq \zeta$, we have
$$\frac{q-g\cdot \zeta}{p-\zeta} =g_1h_1.$$
\end{proof}

\section{Structure in $P$}\label{Sec: StructureInP}
We now use either the coset structure established in Corollary \ref{Coro: LargeCosetIntersection} or the additive energy bound found in Corollary \ref{Coro: FewTrianglesLargeEnergy} to give explicit geometric structure.

We start with the case when $S$ has large intersection with a coset of the type $gT(z)$. From our energy calculations above, we refer to this as the multiplicative case. When $S$ has a large intersection with a coset $gU_0$ this is referred to as the additive case and is dealt with in Section \ref{Sec: AdditiveStructure}.

\subsection{Multiplicative Case: Circle Structure}\label{Sec: MultiplicativeStructure}

By examining the structure of cosets associated with the multiplicative energy, i.e. those of the form $gT(z)$, we can deduce a stronger structural result without needing to use any additive combinatorial machinery. 
Before we do this we clarify that, as our intersection of $gT(z)$ with $\SE$ is non-empty (as it contains some of $S$), we can view the intersection $(gT(z)) \cap S$ as living in a coset of $T(z)\cap \SE$. We will denote this coset by $\theta(T(z) \cap \SE)=(gT(z))\cap \SE$. This is justified in the following lemma.

\begin{lemma}\label{Lem: CosetInSE}
Let $S$ be a subset of $\SE$, $z \in \mathbb{C}$ and $gT(z)$ a coset in $\Aff(\mathbb{C})$. If $(gT(z)) \cap S$ is non-empty, then there is some $\theta \in S$ such that $(gT(z)) \cap S \subseteq \theta(T(z) \cap \SE)$.
\end{lemma}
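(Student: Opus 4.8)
The plan is to show that the set $(gT(z)) \cap S$, which we know is non-empty, is actually contained in a single coset of the subgroup $T(z) \cap \SE$. The key observation is that $T(z) \cap \SE$ is itself a subgroup of $\SE$ (being the intersection of two subgroups of $\Aff(\C)$), so its cosets partition $\SE$. Since $S \subseteq \SE$, every element of $(gT(z)) \cap S$ lives in $\SE$ and hence in some coset of $T(z) \cap \SE$. The content of the lemma is that these elements all lie in the \emph{same} coset, and moreover that this coset can be represented by an element $\theta$ that is itself in $S$.

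First I would pick any $\theta \in (gT(z)) \cap S$; this is possible precisely because the intersection is assumed non-empty, and such a $\theta$ lies in $S$ as required for the conclusion. The goal is then to verify that $(gT(z)) \cap S \subseteq \theta(T(z) \cap \SE)$. Take an arbitrary $\phi \in (gT(z)) \cap S$. Since both $\theta$ and $\phi$ lie in the coset $gT(z)$, we can write $\theta = gh$ and $\phi = gh'$ for some $h, h' \in T(z)$. Then $\theta^{-1}\phi = h^{-1}g^{-1}gh' = h^{-1}h' \in T(z)$, using that $T(z)$ is a subgroup. On the other hand, $\theta, \phi \in S \subseteq \SE$, so $\theta^{-1}\phi \in \SE$ as well. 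Combining these two facts, $\theta^{-1}\phi \in T(z) \cap \SE$, which is exactly the statement that $\phi \in \theta(T(z) \cap \SE)$.

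The main step is therefore just the standard coset-membership manipulation: two elements of a common coset $gT(z)$ differ (on the left, after multiplying by $\theta^{-1}$) by an element of $T(z)$, and if both also lie in $\SE$ then that difference lies in $T(z) \cap \SE$. I would make explicit that $T(z) \cap \SE$ is a genuine subgroup so that the phrase ``coset of $T(z) \cap \SE$'' is meaningful, and that taking $\theta \in S$ (rather than the ambient representative $g$, which need not lie in $\SE$ or in $S$) is what lets us land inside $\SE$. I do not anticipate a serious obstacle here; the only thing to be careful about is that $g$ itself is a general element of $\Aff(\C)$ and need not be a rigid motion, which is exactly why we must re-base the coset at a point $\theta$ of $S$ rather than at $g$. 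This re-basing is the whole point of the lemma and is what makes the subsequent circle-structure argument in Section~\ref{Sec: MultiplicativeStructure} able to work entirely inside $\SE$.
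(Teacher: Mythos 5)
Your proposal is correct and is in substance the same argument as the paper's: the paper invokes the standard fact that a non-empty intersection of cosets $g_1H_1\cap g_2H_2$ is a coset of $H_1\cap H_2$ (applied with $H_1=T(z)$, $H_2=\SE$, viewing $\SE$ as a coset of itself) and then re-bases at an element of $S$, while you simply verify that fact inline by checking $\theta^{-1}\phi\in T(z)\cap\SE$ for $\theta,\phi\in(gT(z))\cap S$. Your explicit remark that $g$ itself need not lie in $\SE$, which forces the re-basing at $\theta\in S$, is exactly the point of the lemma, so nothing further is needed.
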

\begin{proof}
    It suffices to prove that $(gT(z))\cap \SE=\theta(T(z) \cap \SE)$ for some $\theta \in S$.
    For two subgroups $H_1,H_2$, the non-empty intersection of two cosets $g_1H_1,g_2H_2$ is a coset of $H_1 \cap H_2$. 
    
    As $S$ has non-empty intersection with $gT(z)$ and is contained in $\SE$, $S$ has non-empty intersection with $(gT(z))\cap\SE$. Thus, by the above, $(gT(z))\cap\SE$ must lie in a coset of $T(z) \cap \SE$. There is at least one element of $S$ in this coset, so we can choose some element of $S$ as its coset representative.
    % Fixing any $\theta \in S$, we have $g^{-1}\theta \in T(z)$ and hence $\theta^{-1}g=(g^{-1}\theta)^{-1}T(z) =T(z)$. For any element $\phi\in T(z)\cap \SE$ there must be some $\rho \in T(z)$ such that $\phi=\theta^{-1}g\rho$. This means that $\theta\phi \in (gT(z))\cap\SE$ and hence $\theta(T(z)\cap \SE) \subseteq (gT(z)) \cap \SE$. 
    % On the other hand, for all $\theta_1, \theta_2 \in (gT(z))\cap\SE$, there are $\eta_1,\eta_2 \in T(z)$ such that $\theta_1=g\eta_1$ and $\theta_2=g\eta_2$. Thus $\theta_1^{-1}\theta_2=\eta_1^{-1}g^{-1}g\eta_2=\eta_1^{-1}\eta_2 \in T(z)\cap \SE_2$ so that $\theta_2 \in \theta_1(T(z) \cap \SE)$. Taking $\theta_1=\theta$ and keeping $\theta_2$ arbitrary, this shows that $(gT(z)) \cap \SE \subseteq \theta(T(z)\cap \SE)$, concluding the proof.
\end{proof}
\noindent By identifying any point $(x,y) \in \R^2$ with $x+iy \in \C$, we can prove the following.
\begin{proposition}\label{Prop: LargeTorusCapGivesRichCircle}
    Let $P$ be a point set in $\R^2$. Let $z\in\C$, $\theta = (e^{i\alpha},t)\in \SE = S^1 \ltimes \C$. Suppose we have a set $S\subseteq\SE$ such that $|\theta(T(z) \cap \SE)\cap S| \geq C_2|P|$ and each element of $S$ is at least $(C_1|P|)$-rich with respect to the points in $P$. Then there exists a positive constant $C_3$ and a circle $\gamma$ such that $|P\cap \gamma| \geq C_3|P|.$
\end{proposition}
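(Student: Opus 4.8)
The plan is to exploit the rigidity of the coset $\theta(T(z)\cap\SE)$ together with a double count of rich incidences. The key observation is that, since every $h\in T(z)\cap\SE$ is a rotation fixing $z$, each element $\phi=\theta h$ of the coset satisfies $\phi\cdot z=\theta\cdot z=:w$. In other words, the coset $\theta(T(z)\cap\SE)$ is precisely the set of rigid motions in $\SE$ that send the fixed point $z$ to the fixed point $w=\theta\cdot z=e^{i\alpha}z+t$. Because rigid motions preserve distance, any pair $(p,q)\in P^2$ realised by some $\phi$ in the coset (that is, $\phi\cdot p=q$) must satisfy $|p-z|=|q-w|$; equivalently, $p$ lies on the circle about $z$ of some radius $r$ and $q$ lies on the circle about $w$ of the same radius $r$.

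First I would use the richness hypothesis to produce many incidences. Writing $S'=\theta(T(z)\cap\SE)\cap S$, we have $|S'|\geq C_2|P|$ and each $\phi\in S'$ satisfies $|P\cap\phi P|\geq C_1|P|$, so
$$ \sum_{\phi\in S'}|\{(p,q)\in P^2:\phi\cdot p=q\}|=\sum_{\phi\in S'}|P\cap\phi P|\geq C_1C_2|P|^2. $$
Next I would reorganise this count by the realised pairs $(p,q)$. The crucial point is that an orientation-preserving rigid motion is determined by the images of two distinct points: given a pair $(p,q)$ with $p\neq z$ and $|p-z|=|q-w|$, there is a unique $\phi\in\SE$ with $\phi\cdot z=w$ and $\phi\cdot p=q$ (solving $\phi(x)=\lambda x+\mu$ forces $\lambda=(w-q)/(z-p)$, which has modulus $1$ exactly because the pair is balanced, and then $\mu$ is pinned down). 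Hence each such pair is realised by at most one element of the coset, and a fortiori by at most one element of $S'$.

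Therefore, letting $a_r$ and $b_r$ denote the number of points of $P$ on the circle of radius $r$ centred at $z$ and at $w$ respectively, the reorganised count gives
$$ C_1C_2|P|^2\leq \sum_{r}a_rb_r+O(|P|), $$
where the error term absorbs the single degenerate case $p=z$ (possible only if $z\in P$, contributing at most $|S'|$ to the sum). Since $\sum_r a_r=|P|$, I would finish with the bound $\sum_r a_rb_r\leq(\max_r b_r)\sum_r a_r=(\max_r b_r)|P|$, which yields $\max_r b_r\geq \tfrac12 C_1C_2|P|$ for $|P|$ sufficiently large. The radius achieving the maximum is positive (as $b_0\leq 1$) and gives a circle $\gamma$ centred at $w$ with $|P\cap\gamma|\geq C_3|P|$, where $C_3=\tfrac12C_1C_2$.

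The main obstacle is the bookkeeping in the reorganisation step: establishing that each balanced pair is realised by at most one coset element (the uniqueness of an orientation-preserving rigid motion determined by two point-images, which is why working inside $\S=S^1\ltimes\C$ rather than all of $\Aff(\C)$ matters), and checking that the degenerate contribution from $p=z$ is genuinely lower order. Everything else is a clean double count followed by a pigeonhole.
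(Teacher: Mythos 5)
Your argument is correct, and it reaches the conclusion by a genuinely different route from the paper's. The paper defines a digraph on $P$ with an edge $p\to q$ whenever some element of the coset maps $p$ to $q$, pigeonholes to find a vertex of out-degree $\Omega(|P|)$, and then invokes a separate structural lemma (Lemma \ref{Lem: OrbitsAreCircles}: the orbit of a point under $\theta(T(z)\cap\SE)$ is a circle, proved by locating the unique translation in the coset) to conclude that the out-neighbourhood of that vertex lies on a circle. You instead characterise the coset as $\{\phi\in\SE:\phi\cdot z=w\}$ with $w=\theta\cdot z$ (your verification of this is correct in both directions), use distance preservation to confine realised pairs to matched circles $|p-z|=|q-w|=r$, and pigeonhole dually over the radii $r$ rather than over source vertices; this dispenses with the orbit lemma entirely. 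Your injectivity step --- each balanced pair $(p,q)$ with $p\neq z$ is realised by at most one element of the coset --- is not mere bookkeeping: it is precisely the fact that the paper's assertion of ``$C_1C_2|P|^2$ edges'' silently relies on (without it, many (motion, pair) incidences could collapse onto few edges), so your write-up is actually more careful than the paper's on this point. What the paper's route buys is the extra geometric information that the rich circle is an orbit of the coset, which fits the coset machinery running through Section \ref{Sec: MultiplicativeStructure}; what yours buys is a shorter, self-contained double count that never leaves elementary plane geometry.

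One small step needs patching: you absorb the degenerate $p=z$ contribution into an error term $O(|P|)$, saying it is ``at most $|S'|$'', which tacitly assumes $|S'|=O(|P|)$. That is true but requires a reason: either quote the Guth--Katz bound \eqref{Eq: GKExplicitConstant}, which gives $|S'|\leq|S_{\geq C_1|P|}|\leq (C/C_1^2)|P|$, or sidestep the issue as the paper does in the proof of Proposition \ref{Prop: LargePEnergy} by noting that each coset element realises at least $C_1|P|-1$ pairs with $p\neq z$, so that $\sum_r a_rb_r\geq C_2|P|(C_1|P|-1)$ with no error term at all. Finally, your conclusion $C_3=\tfrac12 C_1C_2$ holds only for $|P|$ sufficiently large; for bounded $|P|$ one shrinks $C_3$ in the usual way, as is implicit throughout the paper.
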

\noindent The proof of Proposition \ref{Prop: LargeTorusCapGivesRichCircle} relies on the following technical Lemma.

\begin{lemma}\label{Lem: OrbitsAreCircles}
Fix some $p$ and $z$ in $\R^2$ and $\theta=(e^{i\alpha}, t)$ in $\SE = S_1 \ltimes \C$. The orbit of $p$ under the action of the coset $\theta (T(z)\cap \SE)$ is a circle in $\R^2$, the radius is a function of $z$ and $p$ only.
\end{lemma}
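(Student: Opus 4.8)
The plan is to identify the subgroup $T(z) \cap \SE$ concretely as the group of rotations about the point $z$, to compute the orbit of $p$ under it directly, and then to push this orbit forward by the rigid motion $\theta$, using that rigid motions are isometries of $\R^2$.

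First I would compute $T(z) \cap \SE$. An element of $\SE$ has the form $(e^{i\phi}, s)$ with $s \in \C$. By definition $T(z) = \Stab(z)$ under the action \eqref{eq: GroupAction}, so such an element lies in $T(z)$ precisely when $e^{i\phi} z + s = z$, that is $s = z(1 - e^{i\phi})$. Comparing with the embedding \eqref{eq: Embedding1}, this is exactly the rigid motion given by rotation through angle $\phi$ about the point $z$. Hence $T(z) \cap \SE = \{(e^{i\phi}, z(1 - e^{i\phi})) : \phi \in \R\}$ is the group of all rotations about $z$. (Note that the full torus $T(z)$ in $\Aff(\C)$ also contains scalings fixing $z$; intersecting with $\SE$ discards these and leaves only rotations.)

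Next I would compute the orbit of $p$ under this subgroup. Using the action \eqref{eq: GroupAction}, the element $(e^{i\phi}, z(1 - e^{i\phi}))$ sends $p$ to $e^{i\phi} p + z(1 - e^{i\phi}) = z + e^{i\phi}(p - z)$. As $\phi$ ranges over $\R$, the quantity $e^{i\phi}(p - z)$ ranges over the circle of radius $|p - z|$ centred at the origin, so the orbit of $p$ under $T(z) \cap \SE$ is the circle $\gamma_0$ centred at $z$ of radius $|p - z|$.

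Finally, since the action of a product $\theta h$ is the composition $\theta \cdot (h \cdot p)$ by associativity of the action, the orbit of $p$ under the coset $\theta(T(z) \cap \SE)$ equals the image $\theta \cdot \gamma_0$ of the circle $\gamma_0$ under $\theta$. As $\theta = (e^{i\alpha}, t)$ acts by $x \mapsto e^{i\alpha} x + t$, which is an isometry of $\R^2$, it maps $\gamma_0$ to a circle of the same radius $|p - z|$, now centred at $\theta \cdot z$. This circle is precisely the orbit, and its radius $|p - z|$ depends only on $p$ and $z$, as claimed. There is no genuine obstacle in this argument; the only points requiring care are identifying the stabiliser correctly via \eqref{eq: Embedding1} and keeping the coset action straight as a composition, so that the orbit is exhibited literally as the isometric image of a concentric circle.
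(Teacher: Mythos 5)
Your proof is correct, and it takes a cleaner route through the coset step than the paper does. Both arguments begin identically: you each identify $T(z)\cap\SE$ via the stabiliser condition $e^{i\phi}z+s=z$ as the group of rotations about $z$, and compute the orbit of $p$ under it as the circle $\gamma_0$ of radius $|p-z|$ centred at $z$. Where you diverge is in handling the coset. The paper first proves a small structural claim---that every coset $\theta(T(z)\cap\SE)$ contains exactly one translation, verified by an explicit computation with the group law \eqref{eq: AffGroupProduct} and the embedding \eqref{eq: Embedding2}---then relabels $\theta$ as that translation and translates $\gamma_0$. You instead keep the arbitrary representative $\theta$ and observe that the coset orbit is literally $\theta\cdot\gamma_0$, the image of $\gamma_0$ under the isometry $x\mapsto e^{i\alpha}x+t$, hence a circle of the same radius $|p-z|$ centred at $\theta\cdot z$. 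Your version is shorter, skips the unique-translation computation entirely, and has a side benefit the paper leaves implicit: since any $h\in T(z)\cap\SE$ fixes $z$, replacing $\theta$ by $\theta h$ leaves $\theta\cdot z$ unchanged, so the centre is manifestly well-defined independent of the choice of coset representative. What the paper's detour buys is the slightly finer structural fact (a canonical translation in each coset), but that fact is not used elsewhere in the argument, so nothing is lost by your approach. One degenerate case, present in both proofs and harmless here, is $p=z$, where the ``circle'' has radius zero.
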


Lemma \ref{Lem: OrbitsAreCircles} follows from the observation that intersecting the stabilisers $T(z)$ with $\SE$ reduces the dimension of $T(z)$ from two, as a subset of the affine group, to one in $\SE$. This dimension reduction allows us to show that we have a constant proportion of our points on a circle. The proof of Lemma \ref{Lem: OrbitsAreCircles} is technical, so we will delay it until after Proposition \ref{Prop: LargeTorusCapGivesRichCircle}.

\begin{proof}[Proof of Proposition \ref{Prop: LargeTorusCapGivesRichCircle}]
Define a digraph with vertex set \(P\) and directed edges \((p\to q)\) if there exists an element of $\theta T(z)$ taking \(p\) to \(q\). By assumption there are $ C_2|P|$ elements of $\theta T(z)$ each at least $C_1|P|$-rich, and so there are \(C_1C_2 |P|^2\) edges in this graph. By the handshaking lemma and the pigeon-hole principle there is a vertex of out-degree $C_3|P|$, for a positive constant $C_3$. 

Note that if $(p\to q)$ is an edge then $q$ must lie in the orbit of $p$ under the action of the coset (as this is how edges are defined), so all the $C_3|P|$ points of $P$ that are out-neighbours of $p$ lie on this orbit. By Lemma \ref{Lem: OrbitsAreCircles} this orbit is a circle, so we are done.
\end{proof}

\noindent We now prove Lemma \ref{Lem: OrbitsAreCircles}. This directly uses the structure of the group $T(z)\cap \SE $.

\begin{proof}[Proof of Lemma \ref{Lem: OrbitsAreCircles}]
Recall from the embedding \eqref{eq: Embedding2} that a translation in $\SE$ is an element of the form $(1,t)$ for $t\in \C$. We claim that every coset of the from $\theta(T(z) \cap \SE)$ contains exactly one translation. The proof now follows quickly.

Suppose that $\theta$ is the identity, then our coset $\theta(T(z) \cap \SE)$ is exactly the set of rigid motions that preserve $z$. These are the rotations, in $\R^2$, about the point $z$ (as we are in $\SE$ and not $\Aff(\C)$ we do not have any scalings). So, if $p$ is a point, its orbit under the action of $T(z)\cap\SE$ will be the circle of radius $||p-z||$ centred at $z$.

If $\theta$ is not the identify, relabel so that $\theta$ is the unique translation in $\theta(T(z)\cap\SE)$. The orbit of $p$ under $T(z)\cap\SE$ is a circle and we are applying the uniform translation $\theta$ to this orbit, so it remains a circle. One can see that this circle is centered at $\theta\cdot z$ with radius $||p-z||$.

We now need to justify why each coset $\theta (T(z) \cap \SE)$ has exactly one translation. 
By definition the group $T(z)\cap \SE$ are the rigid motions that preserve the point $z$. As this is in $\SE$, these are exactly the rotations about $z$. So, using the embedding \eqref{eq: Embedding1}, we can write these rotations as
$$ T(z)\cap\SE = \{ (e^{i\vartheta}, z(1-e^{i\vartheta})) : \vartheta \in [0,2\pi) \}. $$
Taking $\theta=(e^{i\alpha},t)$ in $\SE$ and using the group multiplication law in \eqref{eq: AffGroupProduct}, we see that
$$ (e^{i\alpha},t)(T(z)\cap\SE) =  \{ (e^{i(\vartheta+\alpha)}, ze^{i\alpha}(1-e^{i(\vartheta)}) + t) : \vartheta \in [0,2\pi)\}.$$
By embedding \eqref{eq: Embedding2}, translation requires the first component to be $1\in S^1$. This happens for exactly one value of $\vartheta$ i.e. when $\vartheta = -\alpha$. So $\theta (T(z) \cap \SE)$ has exactly one translation.
\end{proof}

\subsection{Additive Case: Line Structure}\label{Sec: AdditiveStructure}

Our aim in this section is to demonstrate that a point set with few classes of congruent triangles has: large intersection with a line $l$; a positive proportion of $P$ lies on very-rich lines all parallel to $l$. We prove the following theorem, essentially a rephrasing of \cite[Lemma 3.2]{Akshat19}.
\begin{theorem}\label{Theorem: PointLineStructure}
Suppose that $P$ is a point set in $\R^2$ with $E_+(P)\geq C|P|^3$ then there is a subset $P'$ of $P$ with $|P'|\geq C_1|P|$ and there exist parallel lines $l_1, l_2 ,\ldots, l_r$ in $\R^2$, and constants $0 < \sigma \leq 1$, $C_3, C_4>0$ such that
$$ |P' \cap l_1| \geq \cdots \geq |P' \cap l_r| \geq \frac{|P' \cap l_1|}{C_3} \geq C_4|P|^\sigma,  $$
and
$$ |P'\setminus (l_1\cup \cdots \cup l_r)|<|P'|/2. $$
\end{theorem}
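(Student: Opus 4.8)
The statement asserts that large additive energy forces a point set to concentrate on a bounded family of parallel lines, each of which is polynomially rich. This is precisely the kind of conclusion delivered by the Balog--Szemer\'edi--Gowers theorem followed by a Freiman-type structural result, so my plan is to run exactly that pipeline. First I would apply Balog--Szemer\'edi--Gowers to the hypothesis $E_+(P)\geq C|P|^3$. Since $E_+(P)$ is within a constant of the maximum possible $|P|^3$, the theorem produces a subset $A\subseteq P$ with $|A|\geq c_1|P|$ and small doubling $|A-A|\leq c_2|A|$, where $c_1,c_2$ depend only on $C$. This is the standard move that converts a global energy bound into a genuine small-doubling subset, at the cost of passing to a positive-proportion subset.

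\textbf{Applying the structure theorem.} With a set $A$ of bounded doubling in hand, I would invoke a Freiman-type theorem in $\R^2$. The cleanest route, and evidently the one the authors intend given the citation to Mudgal \cite{Akshat19} and Green--Ruzsa \cite{green2007freiman}, is to use the fact that a set of small doubling in a torsion-free abelian group is contained in a generalized arithmetic progression of bounded rank and bounded size relative to $|A|$. In the plane, a set of small doubling that is genuinely two-dimensional would have doubling bounded below by a constant strictly larger than what we can tolerate; the dimension of the progression controls how the set spreads. The key geometric extraction is that a bounded-rank GAP containing a constant proportion of $A$ can be sliced into parallel lines (the cosets of a rank-one subprogression in the longest direction), and because the rank is bounded by a constant, one of these slicing directions must carry a positive proportion of the points on lines that are each $\Omega(|A|^{\sigma})$-rich for some $\sigma>0$ determined by the rank.

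\textbf{Extracting the parallel-line family.} Concretely, once $A$ sits inside a GAP $Q = \{a_0 + \sum_{i=1}^d x_i v_i : 0\le x_i < N_i\}$ with $d=O(1)$ and $\prod N_i = O(|A|)$, I would fix the direction $v_1$ corresponding to the largest side length $N_1$ and partition $Q$ into the parallel lines obtained by fixing $(x_2,\dots,x_d)$. Pigeonholing over these at most $\prod_{i\ge2}N_i$ lines, and discarding the lightest ones, produces lines $l_1,\dots,l_r$ each meeting $P' := A$ (or a further constant-proportion refinement) in at least $C_4|P|^\sigma$ points, ordered by richness and all within a constant factor $C_3$ of the heaviest, while the points of $P'$ lying off this family number fewer than $|P'|/2$. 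The ordering and the $C_3$-comparability are arranged by a dyadic popularity argument: restrict to lines whose intersection size lies in a single dyadic band containing a positive proportion of the mass.

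\textbf{The main obstacle.} The genuinely delicate step is the last one: guaranteeing that the surviving lines are \emph{polynomially} rich, i.e.\ $\Omega(|P|^\sigma)$ rather than merely $\Omega(\log|P|)$ or $\Omega(1)$, and that they simultaneously cover a positive proportion of $P'$. This is exactly where the bounded rank of the GAP is essential; if the rank were allowed to grow, the progression could be spread so thinly across many directions that no single direction would be rich. I expect the proof to lean on the precise form of \cite[Lemma 3.2]{Akshat19}, which packages the Freiman output together with this pigeonholing so that the exponent $\sigma$ emerges from the (constant) rank bound and the comparability constant $C_3$ is controlled. I would therefore quote that lemma essentially verbatim rather than reprove the Freiman machinery, which is why the authors describe the result as a rephrasing of it.
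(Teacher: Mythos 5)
Your pipeline --- Balog--Szemer\'edi--Gowers to extract a positive-proportion subset $P'$ with bounded doubling, Green--Ruzsa (with the observation that $\R^2$ is torsion-free, so the subgroup part of the coset progression is trivial) to place $P'$ in a proper progression of bounded rank $s$ and size $O(|P|)$, then slicing along the longest direction and pigeonholing over the $O(|P|^{1-1/s})$ parallel translates to find a line carrying $\Omega(|P|^{1/s})$ points --- is exactly the paper's argument, which itself reproduces Mudgal's Lemma 3.2. The gap is in your final step. A dyadic popularity argument over line richness cannot deliver the stated conclusion: pigeonholing into $O(\log|P|)$ dyadic bands puts only an $\Omega(1/\log|P|)$ fraction of $P'$ into the chosen band, far short of the majority coverage $|P'\setminus(l_1\cup\cdots\cup l_r)|<|P'|/2$ the theorem demands; moreover the most popular band need not contain the heaviest line $l_1$, so it does not yield the required comparability $|P'\cap l_r|\geq |P'\cap l_1|/C_3$ with the top line. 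It would also degrade the richness guarantee by a logarithmic factor, and the paper stresses elsewhere that no such losses can be afforded.

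The correct mechanism --- the one in the paper and in Mudgal's lemma --- uses the small doubling a \emph{second} time. Order the nonempty translates $l_1,\ldots,l_k$ by richness, fix the threshold $|P'\cap l_1|/C_3$ defining $r$, and let $B$ be the mass on the lines below threshold. Properness of the progression makes the translates disjoint, so the sumsets $(P'\cap l_1)+(P'\cap l_j)$ for distinct $j$ are pairwise disjoint and each has size at least $|P'\cap l_1|$, whence
$$|P'+P'| \;\geq\; \sum_{j=r+1}^{k}\bigl\lvert (P'\cap l_1)+(P'\cap l_j)\bigr\rvert \;\geq\; (k-r)\,|P'\cap l_1| \;>\; C_3|B|,$$
and the doubling bound $|P'+P'|\leq C_2'|P'|$ from Balog--Szemer\'edi--Gowers forces $|B|<|P'|/2$ once $C_3\geq \max\{2C_2',1\}$. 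Your fallback of quoting Mudgal's Lemma 3.2 verbatim would indeed close the argument, but since the theorem to be proved is essentially that lemma combined with Balog--Szemer\'edi--Gowers, a self-contained proof needs this sumset argument in place of the dyadic one you sketched. (A minor point: you take $|A-A|\leq c_2|A|$ from BSG where the covering step needs a bound on $|P'+P'|$; this is harmless via Pl\"unnecke--Ruzsa, and the version of BSG the paper states gives the sumset bound directly.)
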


We do not use the structure of the affine group and instead rely only on our energy bound. Because of this reliance, we need $P$ to have essentially maximal additive energy. 

In order to discuss this, we need to introduce the notion of a sumset. For a subset $A$ in an abelian group $G$ we define the sumset $A+A$ to be all distinct pairwise sums,
$$ A + A  = \{ a + a' : a,a' \in A\}.$$
The sumset is closely related to the additive energy, see \eqref{eq: SumSetToEnergy}. To see why an essentially maximal energy bound is required, we introduce two results. For a given $\varepsilon>0$, Stanchescu \cite[Theorem 5(b)]{stanchescu2002planar} and Erd\H{o}s-F\"uredi-Pach-Ruzsa \cite[Theorem 3.1]{gridRevisited} find Behrend-type point sets $P \subset \R^2$ with no collinear triples such that $|P+P|\leq c|P|^{1+\varepsilon}.$ A similar Cauchy-Schwarz argument to those in Section \ref{Sec: FewTrianglesMAnyRichActions} shows
\begin{equation}\label{eq: SumSetToEnergy}
    |P+P|\leq c|P|^{1+\varepsilon} \Rightarrow E_+(P)\geq |P|^{3-\varepsilon}.
\end{equation}
Fortunately, Corollary \ref{Coro: FewTrianglesLargeEnergy} shows that point sets with few classes of congruent triangles have the essentially maximal energy $E_+(P) \geq c|P|^3$. 

In the proof of Theorem \ref{Theorem: PointLineStructure} we use traditional tools of additive combinatorics. The Balog-Szemer\'edi-Gowers Theorem \cite{balog1994BalogSzGowers, gowers2001newSzemeredi} allows us to pass form additive energy to additive sets. We then borrow Mudgal's \cite{Akshat19} use of Green and Ruzsa's \cite{green2007freiman} Freiman-type Theorem to provide the line structure. We will introduce both these results below.

The use of energy in the additive case, as opposed to the use of the affine group structure in the multiplicative case, is the reason for the different strength in the two structural results. One can see that the unipotent subgroup $U_0$ has the same dimension in both $\Aff(\C)$ and $\SE$. So, our dimension-reduction trick used in Section \ref{Sec: MultiplicativeStructure} no longer works. One way to improve the structure found in Theorem \ref{Thm: MainStructuralResult} would be to find a way to use the affine group structure.

\subsubsection{Additive Tools}
We introduce the tools we will use from additive combinatorics. 
The following is the version of Balog-Szemer\'edi-Gowers Theorem we will use. We are not concerned with the explicit constants, as they do not effect the exponent in Theorem \ref{Thm: MainStructuralResult}. We refer those interested in the explicit constants to the proof of Balog \cite{balog2007manyadditive}.

\begin{theorem}[Balog-Szemer\'edi-Gowers]\label{Thm: BSG} Let $A$ be a finite set in an abelian group $G$ with respect to the group operation %prev: action%
$+$. Then if $E_+(A)\geq \alpha|A|^3$ %prev: |A|%
we have an $A' \subseteq A$ such that
$$ |A'| \geq C_1|A| \text{ and } |A' + A'| \leq C_2|A|, $$
where $C_1$ and $C_2$ are positive constants dependent on $\alpha$ only.
\end{theorem}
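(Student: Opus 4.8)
The plan is to prove this classical statement by the standard route: translate the energy hypothesis into a dense bipartite ``popular-sum'' graph, apply a purely graph-theoretic extraction lemma to find a large subset in which almost every pair of vertices is joined by many short paths, and finally convert this rich connectivity into the sumset bound by counting representations. Write $n = |A|$ and $r(s) = |\{(a,a') \in A^2 : a + a' = s\}|$, so that $\sum_s r(s) = n^2$ and $E_+(A) = \sum_s r(s)^2 \geq \alpha n^3$.

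First I would carry out a popularity reduction. Let $P = \{ s : r(s) \geq \tfrac{\alpha}{2} n\}$ be the set of popular sums. Since the unpopular sums contribute at most $\tfrac{\alpha}{2} n \cdot \sum_s r(s) = \tfrac{\alpha}{2} n^3$ to the energy, the popular sums carry at least half of it, $\sum_{s \in P} r(s)^2 \geq \tfrac{\alpha}{2} n^3$. Two consequences follow. Using $r(s) \leq n$, the bipartite graph $B$ on two copies $L, R$ of $A$ with an edge $(a,a')$ whenever $a + a' \in P$ has at least $\sum_{s \in P} r(s) \geq \tfrac{1}{n}\sum_{s \in P} r(s)^2 \geq \tfrac{\alpha}{2} n^2$ edges; and since each popular sum has $\geq \tfrac{\alpha}{2}n$ representations while $\sum_s r(s) = n^2$, there are at most $\tfrac{2}{\alpha} n$ popular sums, i.e. $|P| = O_\alpha(n)$.

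The heart of the argument is the graph-theoretic Balog-Szemer\'edi-Gowers lemma applied to $B$: from a bipartite graph between two $n$-sets with $\geq \delta n^2$ edges one extracts $A' \subseteq A$ with $|A'| \geq c(\delta) n$ such that every pair $(a_1, a_2) \in A' \times A'$ is joined by at least $c'(\delta) n^2$ paths of length three in $B$. I would prove this by a dependent-random-choice / sampling argument: choosing a random vertex $v$ and keeping the vertices with large degree into $N(v)$ produces a set that is large in expectation and in which the expected number of ``bad'' pairs (those with few connecting length-three paths) is small, so a good $v$ exists; a final cleaning step that discards the few vertices lying in many bad pairs upgrades ``almost all pairs'' to ``all pairs'' at the cost of only a constant factor in $|A'|$. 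This extraction is the main obstacle — the popularity reduction and the final count are essentially bookkeeping, whereas here one must manufacture uniform connectivity out of mere density.

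Finally I would convert connectivity into the sumset bound. A length-three path from $a_1 \in L$ to $a_2 \in R$, say $a_1 \sim b \sim c \sim a_2$ with $b \in R$ and $c \in L$, records three popular sums $s_1 = a_1 + b$, $s_2 = c + b$, $s_3 = c + a_2$ in $P$ satisfying $a_1 + a_2 = s_1 - s_2 + s_3$; thus each such path yields a representation of $a_1 + a_2$ as an element of $P - P + P$. Since every pair in $A' \times A'$ has at least $c'(\delta) n^2$ paths, every element of $A' + A'$ has at least $c'(\delta) n^2$ such representations, while the total number of representations is at most $|P|^3 = O_\alpha(n^3)$. Comparing the two counts gives $|A' + A'| \cdot c'(\delta) n^2 \leq |P|^3 = O_\alpha(n^3)$, hence $|A' + A'| = O_\alpha(n) = C_2 |A|$, with $|A'| \geq C_1 |A|$ supplied by the extraction, as required.
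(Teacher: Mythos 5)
The paper does not actually prove Theorem \ref{Thm: BSG}: it is quoted as a classical black box, cited to Balog--Szemer\'edi and Gowers, with \cite{balog2007manyadditive} indicated for explicit constants. So your attempt is measured against the standard literature proof, and your overall route --- popular sums, a graph extraction step, a length-three-path count --- is indeed that proof's skeleton. Your first and last steps are sound: the popularity reduction, the bound $|P| \leq \frac{2}{\alpha}|A|$, and the final double count are all correct (for the latter, the map from paths to triples $(s_1,s_2,s_3) \in P^3$ is injective for a fixed pair $(a_1,a_2)$, and each triple determines $s_1 - s_2 + s_3$, so no triple is counted twice).

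The gap is in the middle step. The ``purely graph-theoretic extraction lemma'' you invoke --- a single $A' \subseteq A$ with $|A'| \geq c(\delta)n$ such that \emph{every} pair $(a_1,a_2) \in A' \times A'$ is joined by $\geq c'(\delta)n^2$ paths of length three --- is false as a statement about arbitrary dense bipartite graphs, and no cleaning step repairs it. Take the graph between $L$ and $R$ (both copies of $[n]$) whose edges are exactly $L_1 \times R_2$ and $L_2 \times R_1$, where $L_1, R_1$ are the first halves and $L_2, R_2$ the second halves: this graph is symmetric and has density $1/2$, yet every length-three path starting at $a_1 \in L_1$ ends in $R_2$, so a pair with $a_1, a_2$ in the same half has \emph{zero} connecting paths of length three; since any $A'$ with $|A'|\geq 3$ contains a same-half pair, no large single set can work. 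This parity obstruction is specific to the \emph{sum} form of BSG: telescoping the three edge-sums only yields $a_1 + a_2 = s_1 - s_2 + s_3$ when $a_1$ and $a_2$ sit on opposite sides of the bipartition, while same-side pairs telescope to differences. What dependent random choice actually delivers is the two-set lemma: $X', Y' \subseteq A$ with $|X'|, |Y'| \geq c(\delta)n$ and every pair of $X' \times Y'$ joined by many length-three paths, whence your count gives $|X' + Y'| = O_{\alpha}(n)$ --- but not yet $|A' + A'| = O_{\alpha}(n)$ for a single $A'$. To reach the single-set statement the paper uses, you need one further additive ingredient: Pl\"unnecke--Ruzsa applied to $|X' + Y'| \leq K|Y'|$ with $K = O_{\alpha}(1)$ gives $|X' + X'| \leq K^2 |Y'| = O_{\alpha}(|A|)$, so $A' = X'$ works; note the plain Ruzsa triangle inequality does not suffice here, as it bounds $|X'+X'|$ by expressions involving the uncontrolled difference set $X' - Y'$. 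Alternatively, one can follow Balog \cite{balog2007manyadditive}, which obtains the single-set form directly. With that one repair your outline becomes the standard proof.
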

We use the Freiman-type result of Green--Ruzsa. In order to state the result we have to give some additional definitions.

Given an abelian group $G$, we define a proper progression $\mathbb{P}$ of arithmetic dimension $s$ and size $\Lambda$ as
$$\mathbb{P} = \{v_0 + u_1v_1 + \cdots + u_sv_s | 0 \leq u_i < \lambda_i \text{ for } 1 \leq i \leq s\},$$
where $\lambda_1\cdots\lambda_s = \Lambda$ and $v_0, v_1,\ldots, v_s$ are elements of $G$ such that all the sums in the progression are distinct.

We further define a coset progression to be a set of the form $\mathbb{P} + H$ where $\mathbb{P}$ is a proper progression and $H$ is a subgroup of $G$.

\begin{theorem}[Green--Ruzsa \cite{green2007freiman}]\label{Thm: GreenRuzsa}
Let $A$ be a subset of an abelian group G such that $|A + A| \leq
K|A|$. Then $A$ is contained in a coset progression of arithmetic dimension $s \leq C_5K^4
\log(K + 2)$ and size $\Lambda = |\mathbb{P} + H| \leq e^{C_5K^4log^2(K+2)}|A|$, for some constant $C_5 > 0$.
\end{theorem}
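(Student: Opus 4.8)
The plan is to follow the standard Fourier-analytic route for a Freiman-type theorem: locate a structured set inside a small iterated sumset of $A$, and then cover $A$ by boundedly many of its translates. First I would apply the Pl\"unnecke--Ruzsa inequalities to promote the hypothesis $|A+A|\le K|A|$ into uniform control on all iterated sumsets, in particular $|2A-2A|\le K^4|A|$. This single quantity is what will drive the final dimension bound, so all of the real work is about understanding the structure of the highly additively-closed set $2A-2A$.

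The core is a Bogolyubov-type lemma. To make harmonic analysis available I would first pass, via Ruzsa's modelling lemma, to a Freiman $8$-isomorphic model of $A$ sitting inside a group on which characters exist, so that iterated sumsets are faithfully preserved. Writing $\widehat{1_A}$ for the Fourier transform, one considers the spectrum $\Gamma=\{\gamma:|\widehat{1_A}(\gamma)|\ge\rho|A|\}$ of characters at which $|\widehat{1_A}|$ is large; Parseval controls $|\Gamma|$ polynomially in $K$ once $\rho$ is chosen appropriately. Expanding the convolution $1_A\ast 1_A\ast 1_{-A}\ast 1_{-A}$, which is supported on $2A-2A$, and isolating the contribution of the small spectrum shows this convolution is strictly positive on the Bohr set $B(\Gamma,1/4)$; hence $B(\Gamma,1/4)\subseteq 2A-2A$ with $|\Gamma|$ bounded by a power of $K$.

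Next I would convert the Bohr set into a coset progression using the geometry of numbers. A Bohr set of rank $d=|\Gamma|$ contains a proper coset progression $\mathbb{P}+H$ of arithmetic dimension at most $d$ and controlled size, where the subgroup $H$ absorbs precisely those directions along which the Bohr set already contains a genuine subgroup; this is the feature that lets the theorem hold in an arbitrary abelian group rather than only in $\Z$. Finally I would invoke Ruzsa's covering lemma: since $A+(\mathbb{P}+H)$ has size comparable to $|\mathbb{P}+H|$, the set $A$ is covered by a number of translates of $\mathbb{P}+H$ that is bounded in terms of $K$. Absorbing these translates by enlarging $\mathbb{P}$ — raising its dimension and size only by bounded factors — produces a single coset progression containing $A$.

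The main obstacle, and the genuinely new content over the classical integer case, is the presence of torsion: one must extract a \emph{proper} progression together with the absorbing subgroup $H$ from the Bohr set in a general abelian setting, and then carefully track how the rank of the Bohr set (polynomial in $K$), the covering multiplicity, and the geometry-of-numbers losses compound. Keeping the dimension polynomial in $K$ so that $s\le C_5K^4\log(K+2)$, while keeping the size blow-up exponential only in $K^4\log^2(K+2)$ so that $\Lambda\le e^{C_5K^4\log^2(K+2)}|A|$, is the delicate quantitative heart of the argument.
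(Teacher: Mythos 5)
First, a framing point: the paper does not prove this statement at all. It is Green and Ruzsa's Freiman-type theorem, quoted verbatim from \cite{green2007freiman} and used as a black box inside the proof of Theorem \ref{Theorem: PointLineStructure}. So there is no internal proof to compare against; the correct benchmark is the argument in the cited source, and your outline does reproduce its architecture: Pl\"unnecke--Ruzsa to control $|2A-2A|$, a Ruzsa-type modelling step to make Fourier analysis available, a Bogolyubov argument placing a Bohr set inside $2A-2A$, extraction of a proper coset progression from the Bohr set via the geometry of numbers (with the subgroup $H$ absorbing torsion --- correctly identified as the genuinely new content over the classical integer case), and a final covering step.

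That said, two of your quantitative steps, as written, would not deliver the stated exponents, and the fixes are ideas rather than bookkeeping. First, Parseval alone gives $|\Gamma| \le \rho^{-2}\alpha^{-1}$ for the $\rho$-spectrum, and positivity of $1_A \ast 1_A \ast 1_{-A} \ast 1_{-A}$ on the Bohr set forces $\rho^2 \lesssim \alpha$, so the rank you obtain is $O(\alpha^{-2})$ --- polynomial in $K$, but of the order $K^8$ or worse after the modelling losses, not $s \le C_5 K^4 \log(K+2)$. The stated dimension requires Chang's spectral lemma (the spectrum lies in the span of $O(\rho^{-2}\log(1/\alpha))$ characters), which your sketch omits. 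Second, the naive Ruzsa covering count is $|A+(\mathbb{P}+H)|/|\mathbb{P}+H|$, and since a proper progression extracted from a rank-$d$ Bohr set can be smaller than the ambient group by a factor $e^{-Cd\log d} = e^{-CK^4\log^2(K+2)}$, this count is \emph{exponential} in $K^4\log^2(K+2)$; absorbing that many translates at one dimension apiece would ruin the dimension bound entirely, which is why Green--Ruzsa use an iterated, Chang-style covering argument instead of a single application of Ruzsa covering. You do flag the quantitative tracking as the delicate heart of the proof, which is fair, but these two points are where a literal implementation of your plan fails to reach the stated bounds. For the present paper none of this matters: Theorem \ref{Theorem: PointLineStructure} invokes Green--Ruzsa with $K=O(1)$, so any Freiman-type theorem with dimension and size depending only on $K$ would serve equally well.
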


\subsubsection{Proof of Theorem \ref{Theorem: PointLineStructure}}

The proof of Theorem \ref{Theorem: PointLineStructure} is--but for some constants changing--the same as the Mudgal's proof \cite[Lemma 3.2]{Akshat19}. We include it here for completeness.
% As our linear map is the identity the $c^6$ in the final line can be replaced with $c$.
% \begin{theorem}[Mudgal \cite{Akshat19}, Theorem 1.5]\label{Thm: LineFreiman}
% Let $c$ be a positive real number and $d$ a natural number. Further, let A be a finite subset of $\R^d$ and $\mathcal{L}\in GL_d(\R)$ be an invertible linear transformation. If
% $$ |A + \mathcal{L}(A)| \leq c|A|, $$
% then there exist parallel lines $l_1, l_2 ,\ldots, l_r$ in $\R^d$, and constants $0 < \sigma \leq 1/2$ and $C_3>0$ such that
% $$ |A \cap l_1| \geq \cdots \geq |A \cap l_r| \geq |A \cap l_1|^{1/2} \geq C_3^{-1}|A|^\sigma,  $$
% and
% $$ |A\setminus (l_1\cup \cdots \cup l_r)|<C_3c^6|A|^{1-\sigma}. $$
% \end{theorem}
% 
% Following the proof of Theorem \ref{Thm: LineFreiman} from \cite{Akshat19}, with some minor adjustments, we gain the following corollary:
% 
\begin{proof}[Proof of Theorem \ref{Theorem: PointLineStructure}]
Use Balog-Szemer\'edi-Gowers on $P$ to gain a subset $P'$ such that $|P'|\geq C_1|P|$ and $|P'+P'| \leq \frac{C_2}{C_1}|P'|$. From this point we are replicating the proof of \cite[Lemma 3.2]{Akshat19}.

Applying Green--Ruzsa to $P'\subseteq \R^2$ we have that $P'$ lies in a coset progression $\mathbb{P} + H$.
Note that, as $P'$ is finite, the bound on $\Lambda$ implies $H$ must be a finite subgroup. The only such subgroup over $\R^2$ is the trivial group. Thus, we can assume that $P'$ lies in the proper progression $\mathbb{P}$ only.

So, we have that $P' \subseteq \mathbb{P}$ where $\mathbb{P}$ is of size $\Lambda \leq C_6|P|$ and arithmetic dimension $s \leq C_7$,
$$ \mathbb{P} = \{v_0 + u_1v_1 + \cdots + u_sv_s | 0 \leq u_i < \lambda_i \text{ for } 1 \leq i \leq s\}. $$
By definition of the progression we have that $\lambda_1 \cdots \lambda_s = \Lambda \leq C_6|P|$, so by the pigeon-hole principle there is an $i$ such that $\lambda_i \geq C_6^{1/s}|P|^{1/s}$, without loss of generality let $i=1$. Define the arithmetic progression $Q$ as
$$Q = \{ u_1v_1 : 0 \leq u_1 <\lambda_1\}.$$
The key step of \cite[Lemma 3.2]{Akshat19} is to think of the proper progression $\mathbb{P}$ as a set of $\Lambda/\lambda_1$ translates of $Q$. As $\mathbb{P}$ is proper these translates are disjoint and thus
$$ \frac{\Lambda}{\lambda_1} < \frac{C_6|P|}{C_6^{1/s}|P|^{1/s}} = C_6^{1-1/s}|P|^{1-1/s}. $$
We have $C_6^{1-1/s}|P|^{1-1/s}$ translates of $Q$ covering all $|P'|\geq C_1|P|$ points of $P'$ and thus by the pigeon-hole principle there is a translate that contains at least $C_1C_6^{1/s-1}|P|^{1/s}$ points of $P'$. This gives us our rich line, which we will denote as $l_1$.

We now examine the above cover to show that a positive proportion of $P'$, and hence $P$, is covered by the translates of $l_1$. Order the cover so that
$$|P' \cap l_1| \geq \cdots \geq |P' \cap l_k| > 0,$$
where each $l_k$ is a translate of $Q$ with non-empty intersection with $P'$. Let $C_3$ be a constant to be determined later, then let $r$ be the minimal positive integer such that
$$|P' \cap l_1| \geq \cdots \geq |P' \cap l_r| \geq \frac{|P' \cap l_1|}{C_3} = \frac{C_1C_6^{1/s-1}|P|^{1/s}}{C_3},$$
if $C_3 \geq 1$ such an $r$ exists. Define the set $B$, of points in $P'$ our rich lines miss, as \linebreak $B=P'\setminus \bigcup_{i=1}^r(P'\cap l_i)$. To prove the result it suffices to show $|B|\leq |P'|/2$. We have that
\begin{equation}\label{Eq: BEq1}
    |B| = \sum_{j=r+1}^k |P'\cap l_j| < (k-r)\frac{|P'\cap l_1|}{C_3},
\end{equation}
rearranging gives $(k-r) > \frac{|B|C_3}{|P'\cap l_1|}$. We also have, by selecting relevant subsets of $P'$ and using that the lines $l_j$ are disjoint, that
\begin{equation}\label{Eq: BEq2}
     |P'+P'| \geq |(P'\cap l_1) + B | = \sum_{j=r+1}^k |(P'\cap l_1) +  (P'\cap l_j)| \geq (k-r)|P'\cap l_1|.
\end{equation}
Combining \eqref{Eq: BEq1} and \eqref{Eq: BEq2} we have that
$$  |P'+P'| > |B|C_3. $$
Using the constant $C_2$ from Theorem \ref{Thm: BSG} we have that
$$|B| < \frac{|P'+P'|}{C_3} \leq \frac{C_2|P'|}{C_3}. $$
Choosing $C_3$ to be the maximum of $2C_2$ and $1$ we can ensure that $|B|<|P'|/2$. To finish the proof as stated let $\sigma = 1/s$ and $C_4 = \frac{C_1C_6^{1/s-1}}{C_3}$.
\end{proof}

\subsection{Proof of Theorem \ref{Thm: MainStructuralResult}}
We now have all the necessary tools to prove our main theorem.
\begin{proof}[Proof of Theorem \ref{Thm: MainStructuralResult}]
We start with Corollary \ref{Coro: LargeCosetIntersection}. This gives us an $S$ in $\SE$, with each element of $S$ a rigid motion that is at $(C_1|P|)$-rich when acting on $P$. Further, there is some $g$ in $\Aff(\C)$, so that at least one of the following holds:
\begin{enumerate}
    \item $\displaystyle |gU_0\cap S| \geq C_2 |P|$,
    \item Some $z\in \C$ so that $\displaystyle |gT(z)\cap S| \geq C_3 |P|$.
\end{enumerate}
If the first of these holds, we get the line structure by combining the first statement of Proposition \ref{Prop: LargePEnergy} with Theorem \ref{Theorem: PointLineStructure}.

If the second holds, note that $S$ has non-empty intersection with a coset $gT(z)$ and $S$ is contained in $\SE$. So we can apply Lemma \ref{Lem: CosetInSE} then Proposition \ref{Prop: LargeTorusCapGivesRichCircle}.
\end{proof}

\bibliography{DiscreteGeometryReferences}{}

\bibliographystyle{plain}

\bigskip
		\footnotesize
		
		 \noindent\textit{E-mail address}, S.~Mansfield: \texttt{sam.mansfield@bristol.ac.uk}\\
		 \textit{E-mail address}, J.~Passant: \texttt{jonathan.passant@bristol.ac.uk}\\
		
		\noindent\textsc{Department of Mathematics, University of Bristol, Bristol, BS8 1UG}\par\nopagebreak

\end{document}